\documentclass[12pt]{amsart}
\usepackage{amsmath}
\usepackage{mathtools}
\usepackage{amssymb}
\usepackage{amsthm}
\usepackage{graphicx}
\usepackage{enumerate}
\usepackage[mathscr]{eucal}
\usepackage[pagewise]{lineno}
\usepackage{tikz}
\usetikzlibrary{decorations.text,calc,arrows.meta}
\theoremstyle{plain}

\setlength{\textwidth}{121.9mm}
\setlength{\textheight}{176.2mm}
\numberwithin{equation}{section}
\setlength{\parindent}{4em}
\usepackage[english]{babel}
\DeclareMathOperator{\conv}{conv}
\DeclareMathOperator{\sgn}{sgn}

\newtheorem{theorem}{Theorem}[subsection]

\newtheorem{lemma}[theorem]{Lemma}

\theoremstyle{definition}
\newtheorem{definition}[theorem]{Definition}

\bibliographystyle{apa}
\usepackage{hyperref}
\hypersetup{
    colorlinks=true,
    linkcolor=blue,
    filecolor=magenta,      
    urlcolor=cyan,
}
\usepackage[pagewise]{lineno}
\bibliographystyle{apa}
\usepackage{hyperref}

\begin{document}

\title[Birkhoff-James Orthogonality in Function Spaces]{Birkhoff-James Orthogonality and Its Pointwise Symmetry in Some Function Spaces}

\author[Bose]{Babhrubahan Bose}
                \newcommand{\acr}{\newline\indent}
                
\subjclass[2020]{Primary 46B20, Secondary 46E30, 46L05}
\keywords{Birkhoff-James orthogonality; Smooth points; Left-symmetric points; Right-symmetric points; $L_p$ spaces; Commutative $C^*$ algebras; Ultrafilters}

\address[Bose]{Department of Mathematics\\ Indian Institute of Science\\ Bengaluru 560012\\ Karnataka \\INDIA\\ }
\email{babhrubahanb@iisc.ac.in}

 \thanks{The research of Babhrubahan Bose is funded by PMRF research fellowship under the supervision of Professor Apoorva Khare and Professor Gadadhar Misra.}

\begin{abstract}
    We study Birkhoff-James orthogonality and its pointwise symmetry in commutative $C^*$ algebras, i.e., the space of all continuous functions defined on a locally compact Hausdorff space which vanish at infinity. We use this characterization to obtain the characterization of Birkhoff-James orthogonality on $L_\infty$ space defined on any arbitrary measure space. We also do the same for the $L_p$ spaces for $1\leq p<\infty$.
\end{abstract}

\maketitle

\section*{Introduction}
{In recent times, symmetry of Birkhoff-James orthogonality has been a topic of considerable interest \cite{annal},  \cite{dkp}, \cite{1}, \cite{3},  \cite{4},  \cite{5}, \cite{8}. It is now well known that the said symmetry plays an important role in the study of the geometry of Banach spaces. The present article aims to explore Birkhoff-James orthogonality and its pointwise symmetry in some function spaces. We have completed such a study for some well studied sequence spaces, namely $\ell_p$ for $1\leq p\leq\infty$, $c$, $c_0$ and $c_{00}$ in \cite{usseq}. Here we take the study one step further by doing the same for commutative $C^*$ algebras and $L_p(X)$ for $1\leq p\leq\infty$ and any measure space $X$.}

Let us now establish the relevant notations and terminologies to be used throughout the article. Denote the scalar field $\mathbb{R}$ or $\mathbb{C}$ by $\mathbb{K}$ and recall the sign function $\sgn:\mathbb{K}\to\mathbb{K},$ given by
\[\sgn(x)=
\begin{cases}
\frac{x}{|x|},\;x\neq0,\\
0,\;x=0.
\end{cases}\] 
Consider a normed linear space $\mathbb{X}$ over $\mathbb{K}$ and denote its continuous dual by $\mathbb{X}^*$. Let $J(x)$ denote the collection of all support functionals of a non-zero vector $x\in \mathbb{X}$, i.e.,
\begin{align}\label{support}
    J(x):=\{f\in \mathbb{X}^*:\|f\|=1,\;|f(x)|=\|x\|\}.
\end{align}
A non-zero {element} $x\in\mathbb{X}$ is said to be \textit{smooth} if $J(x)$ is singleton.\par
Given $x,y\in \mathbb{X}$, $x$ is said to be \textit{Birkhoff-James orthogonal} to $y$ \cite{B}, denoted by $x\perp_By$, if
\begin{align*}
    \|x+\lambda y\|\geq\|x\|,~~\textit{for~all~}\lambda\in\mathbb{K}.
\end{align*}
James proved in \cite{james} that $x\perp_By$ if and only if $x=0$ or there exists $f\in J(x)$ such that $f(y)=0$. In the same article he also proved that a non-zero $x\in \mathbb{X}$ is smooth if and only if Birkhoff-James orthogonality is right additive at $x$, i.e.,
\begin{align*}
    x\perp_By,~x\perp_Bz~~\Rightarrow~~x\perp_B(y+z),~~\textit{for every}~y,z\in\mathbb{X}.
\end{align*}
\par
Birkhoff-James orthogonality is not symmetric in general, i.e., $x\perp_By$ does not necessarily imply that $y\perp_Bx$. In fact, James proved in \cite{james2} that Birkhoff-James orthogonality is symmetric in a normed linear space of dimension higher than 2 if and only if the space is an inner product space. However, the importance of studying the pointwise symmetry of Birkhoff-James orthogonality in describing the geometry of normed linear spaces has been illustrated in \cite[Theorem 2.11]{CSS}, \cite[Corollary 2.3.4.]{Sain}. Let us recall the following definition in this context from \cite{Sain2}, which will play an important part in our present study.
\begin{definition}
An element $x$ of a normed linear space $\mathbb{X}$ is said to be \textit{left-symmetric} (\textit{resp. right-symmetric}) if 
\begin{align*}
    x\perp_By\;\Rightarrow\; y\perp_Bx~~(\textit{resp.~}y\perp_Bx\;\Rightarrow\;x\perp_By),
\end{align*}
for every $y\in \mathbb{X}$.
\end{definition}
Note that by the term \textit{pointwise symmetry of Birkhoff-James orthogonality}, we refer to the left-symmetric and the right-symmetric points of a given normed linear space. The left-symmetric and the right-symmetric points of $\ell_p$ spaces where $1\leq p\leq \infty$, $p\neq2,$ were characterized in \cite{usseq}. {Here we generalize these results in $L_p(X)$ for any measure space $X$ and $p\in[1,\infty]\setminus\{2\}$.} For doing this generalization, we need to characterize Birkhoff-James orthogonality, smooth points, left symmetric points and right symmetric points in commutative $C^*$ algebras, i.e., $C_0(X)$, the space of all continuous functions vanishing at infinity defined on a locally compact Hausdorff space $X$. These characterizations in a given Banach space are important in understanding the geometry of the Banach space. We refer the readers to \cite{annal}, \cite{dkp}, \cite{1}, \cite{3}, \cite{4}, \cite{5}, \cite{8}, \cite{10}, \cite{SRBB}, \cite{12}, \cite{turnsek} for some prominent work in this direction. \par
In the first section we completely characterize Birkhoff-James orthogonality in commutative $C^*$ algebras, i.e., the space of all $\mathbb{K}$-valued continuous functions vanishing at infinity that are defined on a locally compact Hausdorff space $X$ and then characterize the left-symmetric and the right-symmetric points of the space.\par
In the second section, we use the results in the first section to completely characterize Birkhoff-James orthogonality, smoothness and pointwise symmetry of Birkhoff-James orthogonality in $L_\infty(X)$. It can be noted that we are establishing these results for an arbitrary measure space $X$ and in particular, we are not imposing any additional condition on $X$ such as finiteness or $\sigma$-finiteness of the measure.
In the third and fourth sections we {obtain} the same characterizations for $L_1(X)$ and $L_p(X)$ spaces $(p\in(1,\infty)\setminus\{2\})$. Observe that the $p=2$ case is trivial since $L_2(X)$ is a Hilbert space.

\section{Birkhoff-James orthogonality in commutative $C^*$ algebras}
The aim of this section is to obtain a necessary and sufficient condition for two elements in a commutative $C^*$ algebra to be Birkhoff-James orthogonal. Using that characterization, we characterize the smooth points and also study the pointwise symmetry of Birkhoff-James orthogonality in these algebras. We use the famous result Gelfand and Naimark proved in \cite{gelfand}, that any commutative $C^*$ algebra is isometrically $*$-isomorphic to $C_0(X)$ for some locally compact Hausdorff space $X$. Recall that $C_0(X)$ denotes the space of $\mathbb{K}$-valued continuous maps $f$ on $X$ such that
\[\lim\limits_{x\to\infty}f(x)=0,\]
equipped with the supremum norm, where $X\cup\{\infty\}$ is the one-point compactification of $X$. Also note that the $C^*$ algebra is unital if and only if $X$ is compact.\par
We also recall that by the Riesz representation theorem in measure theory, the continuous dual of $C_0(X)$ is isometrically isomorphic to the space of all regular complex finite Borel measures on $X$ equipped with total variation norm and the functional $\Psi_\mu$ corresponding to a measure $\mu$ acting by,
\begin{equation*}
    \Psi_\mu(f):=\int\limits_Xfd\mu,~~f\in C_0(X).
\end{equation*}
\subsection{Birkhoff-James orthogonality in $C_0(X)$}\hfill
\\

We begin with defining the \textit{norm attaining set} of an element $f\in C_0(X)$ by,
\[M_f:=\{x\in X:|f(x)|=\|f\|\}.\]
Clearly, $M_f$ is a compact subset of $X$. We state a characterization of the support functionals of an element $f\in C_0(X)$ using the norm attaining set. The proof of the result relies on elementary computations.
\begin{theorem}\label{norm}
Suppose $f\in C_0(X)$ and $f\neq0$. Let $\mu$ be a complex regular Borel measure. Then $\mu$ is of unit total variation corresponding to a support functional of $f$ if and only if $|\mu|\left(X\backslash M_f\right)=0$ and for almost every $x\in M_f$, with respect to the measure $\mu$, $d\mu(x)=\overline{\sgn(f(x))} d|\mu|(x)$.
\end{theorem}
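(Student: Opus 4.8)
The plan is to recast the statement through the Riesz representation theorem and then reduce everything to the equality case of the triangle inequality for integrals. By the identification of $C_0(X)^*$ with the regular complex Borel measures under the total variation norm, the condition that $\mu$ correspond to a support functional of $f$ means precisely that $|\mu|(X)=1$ and $\left|\int_X f\, d\mu\right|=\|f\|$. Throughout I would use the polar decomposition of a finite complex measure: writing $|\mu|$ for the total variation (which is automatically finite, so Radon--Nikodym applies), there is a Borel function $h$ with $|h|=1$ holding $|\mu|$-almost everywhere and $d\mu=h\, d|\mu|$. Since finiteness of $|\mu|$ is automatic, no regularity or $\sigma$-finiteness of $X$ is needed at this step.

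For the forward implication, assume $\Psi_\mu\in J(f)$. I would run the chain of inequalities
\begin{align*}
    \|f\|=\left|\int_X f h\, d|\mu|\right|\leq\int_X|f|\, d|\mu|\leq\|f\|\,|\mu|(X)=\|f\|,
\end{align*}
so both inequalities are in fact equalities. Equality in the second forces $|f|=\|f\|$ for $|\mu|$-almost every $x$; since $\|f\|\neq0$ this says $|\mu|(X\backslash M_f)=0$, the first asserted condition. Equality in the first (the triangle inequality for the complex integral against the positive measure $|\mu|$) forces the integrand $fh$ to have $|\mu|$-almost everywhere constant argument on the set where it is nonzero; on $M_f$, where $f\neq0$, this yields $h=\overline{\sgn(f)}$ up to a fixed unimodular constant, and that constant may be absorbed into the phase freedom built into the definition of $J(f)$ (recall $J$ is defined via $|f(x)|=\|x\|$). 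This gives $d\mu=\overline{\sgn(f)}\, d|\mu|$ on $M_f$, the second asserted condition.

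For the converse, assume the two conditions. Since $|\mu|$ is concentrated on $M_f$ and $f\,\overline{\sgn(f)}=|f|$ there (as $f\neq0$ on $M_f$), a direct computation gives
\begin{align*}
    \int_X f\, d\mu=\int_{M_f}f\,\overline{\sgn(f)}\, d|\mu|=\int_{M_f}|f|\, d|\mu|=\|f\|\,|\mu|(M_f)=\|f\|,
\end{align*}
using $|\mu|(X)=1=|\mu|(M_f)$. Together with $\|\Psi_\mu\|=|\mu|(X)=1$, this shows $\Psi_\mu\in J(f)$.

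The step I expect to be the main obstacle is the rigorous treatment of the equality case of the triangle inequality: showing that $\left|\int g\, d\nu\right|=\int|g|\, d\nu$ for a positive finite measure $\nu$ forces $g/|g|$ to be $\nu$-almost everywhere equal to a single unimodular constant on $\{g\neq0\}$. This is the standard characterization of the equality case, obtained by writing the left side as $\int e^{-i\theta}g\, d\nu$ with $\theta$ the argument of the integral, comparing real parts, and observing that a nonnegative function of vanishing integral vanishes almost everywhere; the only care needed is bookkeeping the phase constant and noting that ``almost everywhere with respect to $|\mu|$'' and ``with respect to $\mu$'' coincide, since the two measures share the same null sets.
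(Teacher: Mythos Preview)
Your approach is correct and is exactly the ``elementary computation'' the paper alludes to; the paper omits the proof entirely, so there is nothing further to compare against.

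One step needs tightening: the unimodular constant. From equality in the triangle inequality you correctly deduce $h=c\,\overline{\sgn(f)}$ for a fixed unimodular $c$ on $M_f$, but ``absorbing $c$ into the phase freedom of $J(f)$'' is the wrong way around---that freedom produces \emph{more} measures in $J(f)$, it does not make the specific $\mu$ at hand satisfy $d\mu=\overline{\sgn(f)}\,d|\mu|$. The clean fix is to use the piece of information you have not yet invoked: the actual value of the integral. Since $\int_X fh\,d|\mu|=\Psi_\mu(f)$ and $fh=c|f|$ on $M_f$, you get $c\|f\|=\Psi_\mu(f)$. If ``support functional'' is read as $\Psi_\mu(f)=\|f\|$ (real, positive)---which is how the paper implicitly uses this theorem in the proof of Theorem~\ref{ortho}---then $c=1$ and the conclusion follows. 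Under the literal definition~\eqref{support} with $|\Psi_\mu(f)|=\|f\|$, the ``only if'' direction holds only up to a unimodular constant; that is a harmless imprecision in the statement rather than a flaw in your method, but you should say explicitly which reading you adopt rather than waving the constant away.
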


We now come to the characterization of Birkhoff-James orthogonality in $C_0(X)$.
\begin{theorem}\label{ortho}
If $f,g\in C_0(X)$ and $f\neq0$, then $f\perp_Bg$ if and only if $0\in\conv\{\overline{f(x)}g(x):x\in M_f\}$.
\end{theorem}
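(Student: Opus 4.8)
The plan is to use James's criterion to translate $f\perp_B g$ into the existence of a support functional of $f$ that annihilates $g$, then feed in the description of such functionals supplied by Theorem~\ref{norm}, and finally recognize the resulting condition as a statement about convex hulls in the finite-dimensional scalar field $\mathbb{K}$.

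First I would fix $f\neq 0$ and recall that $M_f$ is nonempty and compact. By James's theorem, $f\perp_B g$ holds if and only if there is $\psi\in J(f)$ with $\psi(g)=0$. Using the Riesz representation and Theorem~\ref{norm}, every such $\psi$ is of the form $\Psi_\mu$ for a complex regular Borel measure $\mu$ with $|\mu|(X\setminus M_f)=0$, total variation $|\mu|(X)=1$, and $d\mu(x)=\overline{\sgn(f(x))}\,d|\mu|(x)$ for $|\mu|$-a.e.\ $x\in M_f$. Writing $\nu:=|\mu|$, which is a Borel probability measure on the compact set $M_f$, and using $\overline{\sgn(f(x))}=\overline{f(x)}/\|f\|$ for $x\in M_f$, a direct computation gives
\[
\Psi_\mu(g)=\int_{M_f} g(x)\,\overline{\sgn(f(x))}\,d\nu(x)=\frac{1}{\|f\|}\int_{M_f}\overline{f(x)}g(x)\,d\nu(x).
\]
Conversely, any Borel probability measure $\nu$ on $M_f$ yields, through $d\mu=\overline{\sgn(f)}\,d\nu$, a unit-total-variation measure meeting the hypotheses of Theorem~\ref{norm}, hence a genuine support functional of $f$. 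Thus $f\perp_B g$ is equivalent to the existence of a Borel probability measure $\nu$ on $M_f$ with $\int_{M_f}\overline{f(x)}g(x)\,d\nu(x)=0$.

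It then remains to prove that this last condition is equivalent to $0\in\conv\{\overline{f(x)}g(x):x\in M_f\}$. Set $h(x):=\overline{f(x)}g(x)$; since $h$ is continuous and $M_f$ is compact, the range $h(M_f)$ is a compact subset of $\mathbb{K}$. For the reverse implication, if $0\in\conv(h(M_f))$, then by Carath\'eodory's theorem $0=\sum_{i=1}^{n}t_i\,h(x_i)$ for some $x_i\in M_f$ and weights $t_i\geq 0$ with $\sum_i t_i=1$, so the finitely supported probability measure $\nu=\sum_i t_i\delta_{x_i}$ satisfies $\int_{M_f}h\,d\nu=0$. For the forward implication, suppose such a $\nu$ exists but $0\notin\conv(h(M_f))$. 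As $\conv(h(M_f))$ is compact and convex in the finite-dimensional space $\mathbb{K}$, a separation argument produces a real-linear functional $\phi$ on $\mathbb{K}$ with $\phi(z)\geq\delta>0$ for all $z\in h(M_f)$; integrating, $\phi\!\left(\int_{M_f}h\,d\nu\right)=\int_{M_f}\phi(h)\,d\nu\geq\delta>0$, contradicting $\int_{M_f}h\,d\nu=0$. Hence $0\in\conv(h(M_f))$, which is exactly the asserted condition.

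I expect the main obstacle to be the passage between the measure-theoretic statement (barycenter zero for some probability measure on $M_f$) and the finite-dimensional convex-hull statement. The crucial point is that, because $\mathbb{K}$ is finite-dimensional over $\mathbb{R}$, the convex hull of the compact set $h(M_f)$ is already closed (via Carath\'eodory), so the conclusion can be phrased as $0\in\conv(\cdots)$ rather than merely as membership in its closure; this is also what makes the separation argument and the explicit atomic measure available. The reductions via James's theorem and Theorem~\ref{norm} are essentially bookkeeping, though one must verify that the measure built in the reverse direction indeed satisfies all hypotheses of Theorem~\ref{norm} so as to give a bona fide support functional.
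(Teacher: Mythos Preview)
Your proof is correct and follows the same overall architecture as the paper's: both reduce $f\perp_B g$ via James's criterion and Theorem~\ref{norm} to the statement that $\int_{M_f}\overline{f}g\,d\nu=0$ for some Borel probability measure $\nu$ on $M_f$, and both handle the sufficiency by passing to a finitely supported $\nu$. The difference lies in the necessity direction. The paper introduces the evaluation map $\Phi(\mu)=\int_{M_f}\overline{f}g\,d\mu$ on the weak$^*$-compact set $\Lambda$ of probability measures, invokes the Krein--Milman theorem on the compact convex image $\Phi(\Lambda)\subset\mathbb{K}$, and then argues (with some care) that every extreme point of $\Phi(\Lambda)$ arises from a Dirac mass, so that $0\in\overline{\conv}\{\overline{f(x)}g(x):x\in M_f\}$; a final Carath\'eodory argument removes the closure. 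You bypass all of this with a single separation step: since $\conv(h(M_f))$ is already compact in the finite-dimensional $\mathbb{K}$, if $0$ were outside it a real-linear functional would separate, and integrating against $\nu$ yields an immediate contradiction. Your route is shorter and more elementary, avoiding Krein--Milman and the identification of extreme points entirely; the paper's route, on the other hand, makes explicit the structural fact that $\Phi(\Lambda)=\conv(h(M_f))$, which is slightly more information than is strictly needed here.
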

\begin{proof}
Let $0\in\conv\{\overline{f(x)}g(x):x\in M_f\}$. Then there exist $n\in\mathbb{N}$, $\lambda_1,\lambda_2,\dots,\lambda_n\geq0$ with $\sum\limits_{k=1}^n\lambda_k=1$ and\\
\begin{equation*}
    0=\sum\limits_{k=1}^n\lambda_k\overline{f(x_k)}g(x_k),\\
\end{equation*}
for some $x_1,x_2,\dots, x_n\in M_f$. Consider the functional\\
\begin{equation*}
    \Psi:h\mapsto\frac{1}{\|f\|}\sum\limits_{k=1}^n\lambda_k\overline{f(x_k)}h(x_k),~~h\in C_0(X).\\
\end{equation*}
Then for $h\in C_0(X)$,\\
\begin{equation*}
    |\Psi(h)|=\left|\frac{1}{\|f\|}\sum\limits_{k=1}^n\lambda_k\overline{f(x_k)}h(x_k)\right|\leq\|h\|\left(\sum\limits_{k=1}^n\lambda_k\right)=\|h\|.\\
\end{equation*}
Also,\\
\begin{equation*}
    \Psi(f)=\frac{1}{\|f\|}\sum\limits_{k=1}^n\lambda_k\overline{f(x_k)}f(x_k)=\|f\|\left(\sum\limits_{k=1}^n\lambda_k\right)=\|f\|,\\
\end{equation*}
and\\
\begin{equation*}
    \Psi(g)=\frac{1}{\|f\|}\sum\limits_{k=1}^n\lambda_k\overline{f(x_k)}g(x_k)=0.\\
\end{equation*}
Hence $\Psi$ is a support functional of $f$ such that $\Psi(g)=0$, giving $f\perp_Bg$ and proving the sufficiency. \par
Conversely, suppose $f\perp_Bg$. Then there is a support functional of $f$ that annihilates $g$. Invoking Theorem \ref{norm} we obtain a complex regular Borel measure $\nu$ having $|\nu|(M_f)=1$ and 
\begin{equation*}
    \int\limits_Xhd\nu=\int\limits_{M_f}h(x)\overline{\sgn(f(x))}d|\nu|(x),~~\textit{for every}~h\in C_0(X),
\end{equation*}
such that 
\begin{equation*}
    0=\int\limits_Xgd\nu=\int\limits_{M_f}g(x)\frac{\overline{f(x)}}{\|f\|}d|\nu|(x).
\end{equation*}
Suppose $\Lambda$ is the space of all positive semi-definite regular Borel probability measures on $M_f$ and $\Phi:\Lambda\to\mathbb{K}$ given by,
\begin{equation*}
    \Phi(\mu):=\int\limits_{M_f}\overline{f(x)}g(x)d\mu(x),~~\mu\in\Lambda.
\end{equation*}
Observe that since $\Lambda$ is convex, so is $\Phi(\Lambda)$. Also, as $\Lambda$ is the collection of all support functionals of $|f|\in C_0(X)$, it is compact under the weak* topology by the Banach-Alaoglu theorem \cite[subsection 3.15, p.68]{BAT}. Now, the map $\Phi$ is evaluation at the element $\overline{f}g\in C_0(X)$ on $\Lambda$ and hence is continuous where $\Lambda$ is equipped with the weak* topology. Therefore, $\Phi(\Lambda)$ is compact and hence by the Krein-Milman theorem \cite{KMT}, 
\begin{equation*}
    \Phi(\Lambda)=\overline{\conv}\{\lambda:\lambda~\textit{is~an~extreme~point~of~}\Phi(\Lambda)\}.
\end{equation*}\par
We claim that any extreme point of $\Phi(\Lambda)$ is of the form $\overline{f(x)}g(x)$ for some $x\in M_f$. Suppose, on the contrary, $\Phi(\mu)$ is an extreme point of $\Phi(\Lambda)$ and $\mu$ is not a Dirac delta measure. If $\overline{f}g$ is constant on the support of $\mu$, clearly, $\Phi(\mu)=\overline{f(x)}g(x)$ for any $x$ in the support of $\mu$. Otherwise, there exist $x,y$ in the support of $\mu$ such that $\overline{f(x)}g(x)\neq \overline{f(y)}g(y)$. Consider $0<\delta<\frac{1}{2}|\overline{f(x)}g(x)-\overline{f(y)}g(y)|$ and $U_x\subset M_f$ open such that 
\begin{align*}
    z\in U_x~\Rightarrow~|\overline{f(x)}g(x)-\overline{f(z)}g(z)|<\delta.
\end{align*}
Then $U_x$ and $M_f\backslash U_x$ are two disjoint subsets of $M_f$ having non-zero measures since $M_f\backslash U_x$ contains an open subset of $M_f$ containing $y$. Clearly, since $\mu$ can be written as a convex combination of $\frac{1}{\mu\left(U_x\right)}\mu|_{U_x}$ and $\frac{1}{\mu\left(M_f\backslash U_x\right)}\mu|_{M_f\backslash U_x}$, we get
\begin{align*}
    \Phi(\mu)=\frac{1}{\mu(U_x)}\int\limits_{U_x} \overline{f(z)}g(z)d\mu(z).
\end{align*}
Hence, we have
\begin{align*}
    \left|\overline{f(x)}g(x)-\Phi(\mu)\right|&=\left|\overline{f(x)}g(x)-\frac{1}{\mu(U_x)}\int\limits_{U_x} \overline{f(z)}g(z)d\mu(z)\right|\\
    &\leq\frac{1}{\mu\left(U_x\right)}\int\limits_{U_x}|\overline{f(x)}g(x)-\overline{f(z)}g(z)|d\mu(z)\leq\delta.
\end{align*}
Since $0<\delta<\frac{1}{2}|\overline{f(x)}g(x)-\overline{f(y)}g(y)|$ is arbitrary, we obtain that $\Phi(\mu)=\overline{f(x)}g(x)$ establishing our claim.\par
Hence, 
\begin{equation}\label{convex}
    0=\Phi(|\nu|)\in\Phi(\Lambda)=\overline{\conv}\{\overline{f(x)}g(x):x\in M_f\}.
\end{equation}\par
 We now prove that if $K\subset\mathbb{K}$ is compact, $\conv(K)=\overline{\conv}(K)$. Suppose $z$ is a limit point of $\conv(K)$. Then there exists a sequence of elements $z_n$ in $\conv(K)$ converging to $z$. But by Caratheodory's theorem \cite{caratheodory}, for every $n\in\mathbb{N}$, there exist $\lambda_i^{(n)}\in[0,1]$ and $z_i^{(n)}\in K$ for $i=1,2,3$ such that 
\begin{equation*}
    \sum\limits_{i=1}^3\lambda_i^{(n)}=1,~~\sum\limits_{i=1}^3\lambda_i^{(n)}z_i^{(n)}=z_n.
\end{equation*}
Since $[0,1]$ and $K$ are both compact, we may consider an increasing sequence of natural numbers $\left(n_k\right)_{k\in\mathbb{N}}$ such that $\{\lambda_1^{(n_k)}\}_{k\in\mathbb{N}}$, $\{\lambda_2^{(n_k)}\}_{k\in\mathbb{N}}$, $\{\lambda_3^{(n_k)}\}_{n_k\in\mathbb{N}}$, $\{z_1^{(n_k)}\}_{k\in\mathbb{N}}$, $\{z_2^{(n_k)}\}_{k\in\mathbb{N}}$ and $\{z_3^{(n_k)}\}_{k\in\mathbb{N}}$ are all convergent and thereby obtain that $z\in\conv(K)$. \par
As $M_f$ is compact, $\{\overline{f(x)}g(x):x\in M_f\}$ is a compact subset of $\mathbb{K}$ and hence by \eqref{convex},
\begin{equation*}
    0\in\conv\{\overline{f(x)}g(x):x\in M_f\},
\end{equation*}
establishing the necessity.
\end{proof}
We now furnish a generalization of \cite[Corollary 2.2]{function} characterizing the smoothness of an element of $C_0(X)$.
\begin{theorem}\label{smooth}
 A point $f\in C_0(X)$ is smooth if and only if $M_f$ is a singleton set.
 \end{theorem}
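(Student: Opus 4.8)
The plan is to reduce smoothness to the uniqueness of the support functional and then use Theorem \ref{norm} to translate that into a statement about probability measures carried by $M_f$. By definition $f$ is smooth precisely when $J(f)$ is a singleton, so everything hinges on understanding how large $J(f)$ is. Theorem \ref{norm} says that a unit-total-variation measure $\mu$ corresponds to an element of $J(f)$ if and only if $|\mu|$ is a regular Borel probability measure supported on $M_f$ and $d\mu(x)=\overline{\sgn(f(x))}\,d|\mu|(x)$ for $|\mu|$-almost every $x\in M_f$. Since $|\sgn(f(x))|=1$ for every $x\in M_f$, one recovers $|\mu|$ from $\mu$, so the assignment $|\mu|\mapsto\mu$ is a bijection between regular Borel probability measures on $M_f$ and the elements of $J(f)$. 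The whole statement thus collapses to: $M_f$ carries a unique probability measure if and only if $M_f$ is a single point.

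Granting this dictionary, the sufficiency is immediate. If $M_f=\{x_0\}$, the only probability measure available is the Dirac measure $\delta_{x_0}$, so $J(f)$ reduces to the single functional $h\mapsto\overline{\sgn(f(x_0))}\,h(x_0)$, and $f$ is smooth. For the necessity I would argue by contraposition: if $M_f$ contains two distinct points $x_1\neq x_2$, then $\delta_{x_1}$ and $\delta_{x_2}$ are distinct probability measures on $M_f$, producing the two functionals $\Psi_j\colon h\mapsto\overline{\sgn(f(x_j))}\,h(x_j)$ for $j=1,2$. Because $X$ is locally compact Hausdorff, $C_0(X)$ separates points (Urysohn's lemma), so there is $h\in C_0(X)$ with $h(x_1)=1$ and $h(x_2)=0$; since $x_1\in M_f$ forces $\sgn(f(x_1))\neq0$, this gives $\Psi_1(h)\neq0=\Psi_2(h)$, whence $\Psi_1\neq\Psi_2$ and $J(f)$ is not a singleton. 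Hence $f$ fails to be smooth.

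The substantive input is entirely contained in Theorem \ref{norm}; the remaining work is routine. The one point I expect to need a moment's care is verifying that the correspondence $|\mu|\mapsto\mu$ is genuinely injective on $J(f)$ and that distinct probability measures on $M_f$ really yield distinct support functionals, and it is precisely there that the separation of points of $X$ by $C_0(X)$ is used. Beyond this, I anticipate no obstacle.
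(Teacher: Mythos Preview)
Your proof is correct. The necessity argument is essentially identical to the paper's: both construct the two evaluation functionals $\Psi_i(h)=\overline{\sgn(f(x_i))}\,h(x_i)$ at distinct points $x_1,x_2\in M_f$ and separate them by a compactly supported Urysohn function. The sufficiency, however, is handled differently. The paper invokes Theorem~\ref{ortho} to see that $f\perp_B g$ is equivalent to $g(x_0)=0$ when $M_f=\{x_0\}$, and then verifies right-additivity of Birkhoff--James orthogonality at $f$ (James' criterion for smoothness). You instead go straight through Theorem~\ref{norm}, setting up the bijection between $J(f)$ and regular Borel probability measures on $M_f$, and observing that a one-point space carries a unique such measure. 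Your route is a shade more direct, since it appeals to the definition of smoothness rather than to its reformulation via additivity, and it avoids any dependence on Theorem~\ref{ortho}; the paper's route, on the other hand, illustrates how the orthogonality characterization already encodes smoothness. Both are short and natural.
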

 \begin{proof}
 First if $M_f$ is a singleton set, say $\{x_0\}$,  then clearly by Theorem \ref{ortho}, $f\perp_Bg$ for $g\in C_0(X)$ if and only if $g(x_0)=0$. Hence clearly, for $g,h\in C_0(X)$, $f\perp_B g,h$ would imply
 \begin{equation*}
     g(x_0)=h(x_0)=0~\Rightarrow~ g(x_0)+h(x_0)=0~\Rightarrow ~f\perp_B(g+h).
 \end{equation*}
 Hence $f$ is smooth.\par
 Conversely, if $x_1,x_2\in M_f$, $x_1\neq x_2$, then $\Psi_1,\Psi_2:C_0(X)\to\mathbb{K}$ given by
 \begin{equation*}
     \Psi_i(g):=\overline{\sgn(f(x_i))}g(x_i),~~g\in C_0(X),~i=1,2,
 \end{equation*}
 are two support functionals of $f$. Now, since $X$ is Hausdorff, there exists $U\subset X$ open such that $x_1\in U$ and $x_2\notin U$. Hence, there exists a continuous map $h$ on $X$ having compact support, vanishing outside $U$ and $h(x_1)=1$. Thus $h\in C_0(X)$ and $\Psi_1(h)\neq\Psi_2(h)$.  Therefore $f$ is not smooth.
 \end{proof}
 \subsection{Pointwise symmetry of Birkhoff-James orthogonality in $C_0(X)$}\hfill
 \\
 
 In this subsection we characterize the pointwise symmetry of Birkhoff-James orthogonality in $C_0(X)$.
 We begin with our characterization of the left symmetric points of $C_0(X)$.
 \begin{theorem}\label{left}
  An element $f\in C_0(X)$ is a left symmetric point of $C_0(X)$ if and only if $f$ is identically zero or $M_f$ is singleton and $f$ vanishes outside $M_f$.
 \end{theorem}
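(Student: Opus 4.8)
The plan is to treat the two implications separately and reduce everything to the orthogonality criterion of Theorem \ref{ortho}. The degenerate case $f\equiv 0$ is immediate, since $0\perp_B g$ and $g\perp_B 0$ hold for every $g$; so I assume $f\neq 0$ throughout. For the sufficiency, suppose $M_f=\{x_0\}$ and $f$ vanishes off $M_f$, and let $f\perp_B g$. Theorem \ref{ortho} forces $0\in\conv\{\overline{f(x_0)}g(x_0)\}$, i.e. $\overline{f(x_0)}g(x_0)=0$, and since $f(x_0)\neq 0$ this gives $g(x_0)=0$. Consequently $\overline{g(x)}f(x)=0$ for every $x\in X$ --- at $x_0$ because $g(x_0)=0$, and elsewhere because $f$ vanishes there. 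Hence, unless $g=0$ (when $g\perp_B f$ is trivial), $\{\overline{g(x)}f(x):x\in M_g\}=\{0\}$, so $0$ lies in its convex hull and Theorem \ref{ortho} yields $g\perp_B f$. Thus $f$ is left symmetric.

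For the necessity I argue contrapositively, constructing $g$ with $f\perp_B g$ but $g\not\perp_B f$. Fix any $x_0\in M_f$. To say that $f$ has the stated form is exactly to say that $f(y)=0$ for every $y\neq x_0$; so if $f$ is not of this form there is some $y\neq x_0$ with $f(y)\neq 0$ (this covers both the case that $M_f$ has a second point and the case $M_f=\{x_0\}$ with $f$ not vanishing identically off $x_0$). Using that $X$ is locally compact Hausdorff, I choose an open neighbourhood $V$ of $y$ with $x_0\notin\overline V$ and, by continuity of $f$, small enough that $|f(x)-f(y)|<\tfrac12|f(y)|$ on $\overline V$; this guarantees $\Re\bigl(\overline{\sgn f(y)}\,f(x)\bigr)\geq\tfrac12|f(y)|>0$ throughout $\overline V$. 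By Urysohn's lemma for locally compact Hausdorff spaces I pick $v\in C_0(X)$ with $0\le v\le 1$, $v(y)=1$ and support inside $V$, and set $g:=v\cdot\sgn f(y)$.

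The point of this single phase-aligned bump is that it settles both relations at once. Since $x_0\notin V$ we have $g(x_0)=0$; as $x_0\in M_f$, the value $\overline{f(x_0)}g(x_0)=0$ already belongs to $\{\overline{f(x)}g(x):x\in M_f\}$, so $0$ lies in its convex hull and Theorem \ref{ortho} gives $f\perp_B g$. On the other hand $\|g\|=1$ is attained only where $v=1$, so $M_g\subseteq\overline V$, and for $x\in M_g$ one has $g(x)=\sgn f(y)$, whence $\overline{g(x)}f(x)=\overline{\sgn f(y)}\,f(x)$ has real part at least $\tfrac12|f(y)|$. Thus the compact set $\{\overline{g(x)}f(x):x\in M_g\}$ lies in the half-plane $\{\,z:\Re z\ge\tfrac12|f(y)|\,\}$, its convex hull omits $0$, and Theorem \ref{ortho} gives $g\not\perp_B f$. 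This contradiction shows $f$ must vanish at every $y\neq x_0$, which simultaneously forces $M_f=\{x_0\}$, completing the necessity.

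The main obstacle is the necessity construction, specifically producing a well-controlled witness $g$ in an arbitrary --- possibly non-metrizable --- locally compact Hausdorff space. One needs Urysohn's lemma to obtain the bump $v$, and the compactness of $\overline V$ to upgrade the pointwise positivity of $\Re\bigl(\overline{\sgn f(y)}\,f\bigr)$ to a uniform lower bound, which is exactly what keeps $0$ outside the convex hull in the complex case. The clean observation that a single bump placed at $y$ and vanishing at a norm-attaining point $x_0$ yields $f\perp_B g$ for free (via the zero it contributes over $M_f$) while destroying $g\perp_B f$ is what makes the argument uniform across the two failure modes and avoids any case analysis on the signs or phases of $f$.
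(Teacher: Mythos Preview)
Your proof is correct and follows essentially the same approach as the paper: both directions rest on Theorem \ref{ortho}, and for the necessity both constructions place a Urysohn bump near a point $y\neq x_0$ where $f$ does not vanish, vanishing at the norm-attaining point $x_0$. The only cosmetic difference is that the paper takes $g(x)=\sgn(f(x))h(x)$ (pointwise phase alignment, using a neighbourhood on which $f$ is nonvanishing so that $\sgn f$ is continuous), whereas you take $g=v\cdot\sgn f(y)$ with a constant phase and a continuity estimate $|f-f(y)|<\tfrac12|f(y)|$ to force $\Re(\overline{g}f)>0$ on $M_g$; both devices equally keep $0$ out of $\conv\{\overline{g(x)}f(x):x\in M_g\}$.
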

 \begin{proof}
 We begin with the sufficiency. If $M_f=\{x_0\}$ for some $x_0\in X$, then by Theorem \ref{ortho}, $f\perp_Bg$ for any $g\in C_0(X)$ if and only if $g(x_0)=0$. Then clearly, $x_0\notin M_g$ and hence if $x_1\in M_g$, $f(x_1)=0$ giving $g\perp_Bf$ by Theorem \ref{ortho}.\par
 Conversely, suppose $f\in C_0(X)$ is left-symmetric and not identically zero. Suppose $x_1\in M_f$ and $x_2\in X$ such that $x_1\neq x_2$, $f(x_2)\neq0$. Consider $U,U'\subset X$ open containing $x_2$ such that $x_1\notin U$ and $f$ does not vanish on $U'$. Set $U''=U\cap U'$. Consider a continuous function $h:X\to[0,1]$ having compact support such that $h(x_2)=1$ and $h$ vanishes outside $U''$. Set $g(x):=\sgn(f(x))h(x)$, $x\in X$. Then clearly $g\in C_0(X)$ and $g(x_1)=0$ giving $f\perp_Bg$ by Theorem \ref{ortho}. But clearly $\sgn(g(x))=\sgn(f(x))$ for every $x\in M_g$. Hence $g\not\perp_Bf$ by Theorem \ref{ortho}, establishing the necessity.
 \end{proof}
 Note that this theorem clearly states that if $X$ has no singleton connected component, $C_0(X)$ has no non-zero left symmetric point. \par
 We next characterize the right-symmetric points.
 \begin{theorem}\label{right}
  An element $f\in C_0(X)$ is right-symmetric if and only if $M_f=X$. Hence, in particular if $X$ is not compact, $C_0(X)$ has no non-zero right symmetric point.
 \end{theorem}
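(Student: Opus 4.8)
The plan is to run everything through the convex-hull criterion of Theorem \ref{ortho}, exploiting the fact that $\overline{f(x)}g(x)$ and $\overline{g(x)}f(x)$ are complex conjugates of each other. For sufficiency, suppose $M_f=X$. The case $f=0$ is trivial (then $0$ is right-symmetric and $M_0=X$), so assume $f\neq0$; then $|f|\equiv\|f\|$ on $X$. Let $g\perp_B f$ with $g\neq0$ (the case $g=0$ being trivial). By Theorem \ref{ortho} there are $x_1,\dots,x_n\in M_g$ and weights $\lambda_k\geq0$ with $\sum_k\lambda_k=1$ and $\sum_k\lambda_k\overline{g(x_k)}f(x_k)=0$. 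Since $M_g\subseteq X=M_f$, every $x_k$ lies in $M_f$, and conjugating this scalar identity gives $\sum_k\lambda_k\overline{f(x_k)}g(x_k)=0$ with all $x_k\in M_f$; hence $0\in\conv\{\overline{f(x)}g(x):x\in M_f\}$ and $f\perp_B g$ by Theorem \ref{ortho}. Thus $f$ is right-symmetric.

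For necessity I would argue the contrapositive: assuming $M_f\neq X$ (so $f\neq0$ and $M_f$ is a nonempty proper compact set) I would produce a $g$ with $g\perp_B f$ but $f\not\perp_B g$. Fix $x_0\in X\setminus M_f$. Since $|f|\equiv\|f\|>0$ on $M_f$, $f$ is nonvanishing on a neighborhood of $M_f$, so $\sgn(f)$ is continuous there. Using that $X$ is locally compact Hausdorff, I would choose by Urysohn's lemma two $[0,1]$-valued bump functions with disjoint compact supports: $\psi$ with $\psi\equiv1$ on $M_f$ and $\mathrm{supp}\,\psi\subseteq\{f\neq0\}\setminus\{x_0\}$, and $\rho$ with $\rho(x_0)=1$. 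Then set
\[
 g:=\sgn(f)\,\psi+v\,\rho,
\]
where $v:=-\sgn(f(x_0))$ when $f(x_0)\neq0$ and $v:=1$ when $f(x_0)=0$. Because the two supports are disjoint, $g\in C_0(X)$ satisfies $|g|\leq1$ everywhere, equals $\sgn(f)$ on $M_f$, and has $g(x_0)=v$ of modulus $1$; hence $\|g\|=1$ and $M_f\cup\{x_0\}\subseteq M_g$.

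I would then read off the two relations from Theorem \ref{ortho}. On $M_f$ one has $\overline{f(x)}g(x)=\overline{f(x)}\,\sgn(f(x))=|f(x)|=\|f\|>0$, so $\conv\{\overline{f(x)}g(x):x\in M_f\}=\{\|f\|\}$ avoids $0$ and $f\not\perp_B g$. On the other hand, the set $\{\overline{g(x)}f(x):x\in M_g\}$ contains the value $\|f\|>0$ (from any point of $M_f\subseteq M_g$) together with $\overline{v}f(x_0)$, which equals $-|f(x_0)|<0$ if $f(x_0)\neq0$ and $0$ if $f(x_0)=0$; in either case $0$ lies in the convex hull of these real numbers, so $g\perp_B f$. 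This contradicts right-symmetry, forcing $M_f=X$. The parenthetical assertion is then immediate: a nonzero right-symmetric $f$ has $|f|\equiv\|f\|>0$ on all of $X$, so $\{x:|f(x)|\geq\|f\|/2\}=X$ must be compact because $f$ vanishes at infinity; hence $X$ is compact, and equivalently a noncompact $X$ admits no nonzero right-symmetric point.

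I expect the main obstacle to be the construction of $g$ rather than the verification: one must simultaneously keep $g=\sgn(f)$ on $M_f$ (which needs $\sgn(f)$ continuous there, available since $f$ is nonvanishing near the compact set $M_f$), implant a unit-modulus spike of the correct phase at $x_0$, and retain $|g|\leq1$ globally while staying in $C_0(X)$. Separating the compact set $M_f$ from $x_0$ and invoking Urysohn's lemma on the locally compact Hausdorff space $X$ is exactly what makes the disjoint-support recipe work; once $g$ is in hand, the rest is the short conjugation and sign bookkeeping recorded above.
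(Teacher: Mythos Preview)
Your proof is correct and follows essentially the same strategy as the paper's: both directions go through Theorem~\ref{ortho}, and for necessity both separate the compact set $M_f$ from a point $x_0\notin M_f$ via Urysohn-type bumps to build a $g$ that agrees in phase with $f$ on $M_f$ while carrying an opposite-sign (or zero) contribution at $x_0$. The only cosmetic difference is that the paper splits explicitly into the cases $f(x_0)=0$ and $f(x_0)\neq0$ with two separate formulas for $g$ (normalized so that $\|g\|=\|f\|$ and $g=f$ on $M_f$), whereas you handle both at once through the parameter $v$ and work with $\|g\|=1$ and $g=\sgn(f)$ on $M_f$.
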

 \begin{proof}
 We again begin with the sufficiency. If $M_f=X$, then by Theorem \ref{ortho}, $g\perp_B f$ only if
 \begin{align*}
     0\in\conv\{\overline{g(x)}f(x):x\in M_g\}.
 \end{align*}
 Since $M_g\subset X=M_f$, we clearly obtain
 \begin{align*}
     0\in\conv\{\overline{f(x)}g(x):x\in M_f\},
 \end{align*}
 and hence $f\perp_Bg$ by Theorem \ref{ortho}.\par
 Conversely, suppose $f\in C_0(X)$ is right-symmetric and not identically zero. For the sake of contradiction, let us assume that $M_f\neq X$. We consider two cases. \\
 \textbf{Case 1:} \textit{$f(x_0)=0$ for some $x_0\in X$.}\\
 Since $x_0\notin M_f$ and $M_f$ is compact, we obtain $U,U'\subset X$ open such that $U\cap U'=\emptyset$, $x_0\in U$ and $M_f\subset U'$. Now, consider two continuous functions $h,h':X\to[0,1]$ having compact supports such that $h(x_0)=1$, $h$ vanishes outside $U$ and $h'$ is identically 1 on $M_f$ and vanishes outside $U'$. Set $g(x):=\|f\|h(x)+f(x)h'(x)$, $x\in X$. Then $\|g\|=\|f\|$ and $x_0\in M_g$. Hence by Theorem \ref{ortho}, $g\perp_Bf$. However, if $x\in M_f$, $g(x)=f(x)$ and hence by Theorem \ref{ortho}, $f\not\perp_Bg$.\\
 \textbf{Case 2:} \textit{$f$ is non-zero everywhere on $X$ but there exists $x_0\in X\backslash M_f$.}\\
 Let us again consider $U$, $U'$, $h$ and $h'$ as before and set $g(x):=-\|f\|\sgn(f(x))h(x)+f(x)h'(x)$. Then clearly, $M_f\subset M_g$ and $x_0\in M_g$. Also, $g(x)=f(x)$ for $x\in M_f$ and $g(x_0)=-\|f\|\sgn(f(x_0))$ giving $g\perp_Bf$ by Theorem \ref{ortho}. Also, by the same theorem $f\not\perp_Bg$, proving the necessity. 
 \end{proof}

\section{Birkhoff-James orthogonality and its pointwise symmetry in $L_\infty$ spaces}
In this section, we study Birkhoff-James orthogonality and its pointwise symmetry in $L_\infty$ spaces. Note that since $L_\infty$ spaces are also commutative $C^*$ algebras, we are going to use the results from Section 2 for this study. We begin with representing $L_\infty(X)$ as $C_0(Y)$ for some suitable locally compact, Hausdorff $Y$. We then study this representation and use the results of Section 2 to characterize Birkhoff-James orthogonality and its pointwise symmetry in $L_\infty(X)$.\par

We begin by considering a positive measure space $(X,\Sigma,\lambda)$ and $L_\infty^\mathbb{K}(X,\Sigma,\lambda)$, the space of all essentially bounded $\mathbb{K}$ valued functions on $X$ equipped with the essential supremum norm. Without any ambiguity, we refer to $L_\infty^\mathbb{K}(X,\Sigma,\lambda)$ as $L_\infty(X)$. \par
We now represent $L_\infty(X)$ as the space of continuous functions on a compact topological space equipped with the supremum norm. We begin with a definition:
\begin{definition}
    A \textit{0-1 measure with respect to $\lambda$} is a finitely additive set function $\mu$ on $(X,\Sigma)$ taking values in $\{0,1\}$ such that $\mu(X)=1$, $\mu(A)=0$ whenever $\lambda(A)=0$.
\end{definition}
Let us define $\mathfrak{G}$ as the collection of all 0-1 measures with respect to $\lambda$. We consider the $t$ topology on $\mathfrak{G}$ having a basis consisting of sets of the following form:
\begin{equation*}
    t(A):=\{\mu\in\mathfrak{G}:\mu(A)=1\},~~A\in\Sigma,~\lambda(A)>0.
\end{equation*}
Yosida and Hewitt proved the following representation result in \cite{Y-H}:
\begin{theorem}\label{representation}
\strut\\
    1. The topological space $\left(\mathfrak{G},t\right)$ is compact and Hausdorff.\\
    2. The map $T:L_\infty(X)\to C^\mathbb{K}\left(\mathfrak{G},t\right)$ given by
    \begin{equation*}
        T(f)(\mu):=\int\limits_Xfd\mu,~~\mu\in\mathfrak{G},~f\in L_\infty(X),
    \end{equation*}
    is an isometric isomorphism.
\end{theorem}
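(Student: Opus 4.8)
The plan is to recognize $\mathfrak{G}$ as the space of ultrafilters on the measure algebra $\Sigma/\mathcal{N}$, where $\mathcal{N}$ is the ideal of $\lambda$-null sets, and $T$ as the associated Gelfand-type transform, then prove the two assertions by the standard machinery for such transforms: a Tychonoff-type compactness argument for part~1, and an argument combining multiplicativity of $T$ with the Stone--Weierstrass theorem for part~2. The first observation I would record is that a $0$-$1$ measure $\mu$ is the same datum as the family $\mathcal{U}_\mu := \{A \in \Sigma : \mu(A) = 1\}$, and that finite additivity together with $\{0,1\}$-valuedness forces $\mathcal{U}_\mu$ to be an ultrafilter on $\Sigma$ containing no null set; this dictionary underlies every later step.

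For part~1, I would embed $\mathfrak{G}$ into the product space $\{0,1\}^\Sigma$ (each $\mu$ viewed as the map $A \mapsto \mu(A)$), which is compact by Tychonoff. I would first check that the subspace topology inherited from the product agrees with $t$: the product subbasic sets $\{\mu : \mu(A) = 1\}$ and $\{\mu : \mu(A) = 0\}$ restrict on $\mathfrak{G}$ to $t(A)$ and $t(A^c)$ respectively (using $\mu(A) + \mu(A^c) = 1$), and are empty when the relevant set is null. Then I would show $\mathfrak{G}$ is closed in $\{0,1\}^\Sigma$: each defining condition---$\mu(X) = 1$, finite additivity on every disjoint pair $A,B$, and $\mu(A) = 0$ for every null $A$---cuts out a closed (indeed clopen, as it depends on finitely many discrete coordinates) subset, and $\mathfrak{G}$ is their intersection. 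Compactness follows. For Hausdorffness, given $\mu_1 \neq \mu_2$ I would pick $A$ with $\mu_1(A) = 1$, $\mu_2(A) = 0$ (so $\lambda(A) > 0$ by the null condition, and $\lambda(A^c) > 0$ since $\mu_2(A^c) = 1$); then $t(A)$ and $t(A^c)$ are disjoint $t$-open sets separating them, since no measure assigns value $1$ to both $A$ and $A^c$.

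For part~2, I would first make sense of $\int_X f\, d\mu$ for $\mu \in \mathfrak{G}$ via the finitely additive integral, equivalently the ultralimit of $f$ along $\mathcal{U}_\mu$, and record its properties: it is linear, satisfies $|\int_X f\, d\mu| \leq \|f\|_\infty$, respects conjugation, and---crucially---is multiplicative, $\int_X fg\, d\mu = (\int_X f\, d\mu)(\int_X g\, d\mu)$, because $f$ and $g$ are each essentially constant to within $\varepsilon$ on a $\mu$-full set. Uniform approximation of $f \in L_\infty(X)$ by simple functions, whose images $\mu \mapsto \sum c_i \mu(A_i)$ are manifestly $t$-continuous, shows $T(f) \in C^\mathbb{K}(\mathfrak{G}, t)$, so $T$ is a well-defined $*$-algebra homomorphism. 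Isometry is where the topology must cooperate with the measure: $|T(f)|^2 = T(|f|^2)$, and for the nonnegative $|f|^2$ one has $\int_X |f|^2\, d\mu \leq \|f\|_\infty^2$ always, while for each $\varepsilon > 0$ the set $A_\varepsilon = \{|f| > \|f\|_\infty - \varepsilon\}$ has positive measure, so there exists $\mu \in \mathfrak{G}$ with $\mu(A_\varepsilon) = 1$, whence $\int_X |f|^2\, d\mu \geq (\|f\|_\infty - \varepsilon)^2$; letting $\varepsilon \to 0$ gives $\|T(f)\|_\infty = \|f\|_\infty$. Finally, surjectivity follows from Stone--Weierstrass: $T(L_\infty(X))$ is a subalgebra of $C^\mathbb{K}(\mathfrak{G}, t)$ that contains the constants ($T(1) = 1$), is closed under conjugation, separates points (if $\mu_1(A) \neq \mu_2(A)$ then $T(\chi_A)$ separates them), and is closed, being the isometric image of the complete space $L_\infty(X)$; so it exhausts $C^\mathbb{K}(\mathfrak{G}, t)$. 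Injectivity is immediate from isometry.

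I expect the main obstacles to be the two places where pure topology must be fed by genuine measure-theoretic input. The first is guaranteeing that $\mathfrak{G}$ is rich enough, i.e., that $t(A) \neq \emptyset$ for every $A$ with $\lambda(A) > 0$; this is exactly the assertion that every nonzero element of $\Sigma/\mathcal{N}$ lies in some ultrafilter, and it rests on the Boolean prime ideal (ultrafilter) lemma. Without it the isometry, point separation, and hence surjectivity all collapse. The second is establishing multiplicativity of the finitely additive integral cleanly, since it is what promotes $T$ from a linear isometry to an algebra homomorphism and thereby makes Stone--Weierstrass applicable; the routine but careful part is verifying that the ultralimit is well defined modulo null sets and commutes with products and conjugation.
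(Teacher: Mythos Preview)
The paper does not supply its own proof of this theorem: it is stated with the preamble ``Yosida and Hewitt proved the following representation result in \cite{Y-H}'' and is simply cited. Your proposal therefore cannot be compared to a proof in the paper; rather, you have supplied a self-contained argument where the paper offers only a reference.

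That said, your argument is correct and follows the standard Stone--Gelfand route. The identification of $\mathfrak{G}$ with a closed subset of $\{0,1\}^\Sigma$ under the product topology, the check that the subspace topology coincides with $t$ (including the null-set edge cases), and the Hausdorff separation via $t(A)$, $t(A^c)$ are all sound. For part~2, the ultralimit interpretation of the finitely additive integral, its multiplicativity, the isometry via the sets $A_\varepsilon=\{|f|>\|f\|_\infty-\varepsilon\}$, and the Stone--Weierstrass closure are the expected ingredients and you have assembled them correctly. Your flagged reliance on the ultrafilter lemma to ensure $t(A)\neq\emptyset$ whenever $\lambda(A)>0$ is exactly the point where choice enters, and the paper itself uses it implicitly (it invokes Zorn's lemma immediately after defining $\lambda$-ultrafilters). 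In short: your proof is valid, and it is more than the paper itself provides for this particular statement.
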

In the first subsection we study the space of 0-1 measures with respect to $\lambda$ and integrals with respect to the measures. We characterize Birkhoff-James orthogonality between two elements of $L_\infty(X)$ in the second subsection along with characterization of smoothness of a point. The third subsection comprises of characterizations of pointwise symmetry in $L_\infty(X)$.
\subsection{0-1 measures with respect to $\lambda$ and $\lambda$-ultrafilters}\hfill
\\

In this subsection, we obtain a one to one correspondence between all the 0-1 measures with respect to $\lambda$ and all $\lambda$-ultrafilters and therefore use the $\lambda$-ultrafilters to study integrals with respect to the 0-1 measures with respect to $\lambda$. We begin with the definition of a $\lambda$-filter.
\begin{definition}
    A non-empty subset $\mathcal{F}$ of $\Sigma$ is called a \textit{$\lambda$-filter on $X$} if\\
    1. $\lambda(A)>0$ for every $A\in \mathcal{F}$.\\
    2. For every $A,B\in\mathcal{F}$, $A\cap B\in\mathcal{F}$.\\
    3. $B\in\mathcal{F}$ for every $B\supset A$, $B\in\Sigma$ and $A\in\mathcal{F}$\\
    A $\lambda$-filter $\mathcal{U}$ is called a \textit{$\lambda$-ultrafilter} if any $\lambda$-filter containing $\mathcal{U}$ is $\mathcal{U}$ itself.
\end{definition}
The existence of $\lambda$-ultrafilters is a direct consequence of Zorn's lemma. Before proceeding further, we derive a lemma that is going to be used throughout the section.
\begin{lemma}\label{significant}
Suppose $\mathcal{U}$ is a $\lambda$-ultrafilter.\\
1. If $A\in\Sigma$ such that $A\notin\mathcal{U}$, then there exists $B\in\mathcal{U}$, such that $\lambda(A\cap B)=0$.\\
2. If $\bigcup\limits_{k=1}^nA_k\in\mathcal{U}$ for $A_1, A_2,\dots,A_n\in\Sigma$, then $A_i\in\mathcal{U}$ for some $1\leq i\leq n$.
\end{lemma}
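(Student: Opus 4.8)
The plan is to prove the two parts in order, deriving part 2 quickly from part 1 by a finite-intersection argument. Throughout I rely only on the three defining conditions of a $\lambda$-filter and on the maximality of a $\lambda$-ultrafilter.

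For part 1, I would argue by contradiction. Suppose $A\notin\mathcal{U}$ and yet $\lambda(A\cap B)>0$ for every $B\in\mathcal{U}$. Under this hypothesis I would form the $\lambda$-filter generated by $\mathcal{U}\cup\{A\}$, namely
\[
\mathcal{F}:=\{C\in\Sigma:C\supset A\cap B\text{ for some }B\in\mathcal{U}\}.
\]
I would then verify the three defining conditions of a $\lambda$-filter for $\mathcal{F}$. Positivity of measure is exactly where the contradiction hypothesis is used: if $C\supset A\cap B$ then $\lambda(C)\geq\lambda(A\cap B)>0$. Closure under finite intersection uses the intersection property of $\mathcal{U}$, since $C_1\supset A\cap B_1$ and $C_2\supset A\cap B_2$ give $C_1\cap C_2\supset A\cap(B_1\cap B_2)$ with $B_1\cap B_2\in\mathcal{U}$. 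Closure under supersets is immediate from the definition of $\mathcal{F}$. Finally $\mathcal{F}$ contains every $B\in\mathcal{U}$ (as $B\supset A\cap B$) and also contains $A$ (as $A\supset A\cap B$ for any $B\in\mathcal{U}$, the ultrafilter being non-empty). Since $A\notin\mathcal{U}$, the filter $\mathcal{F}$ strictly contains $\mathcal{U}$, contradicting the maximality of $\mathcal{U}$. Hence there must be some $B\in\mathcal{U}$ with $\lambda(A\cap B)=0$.

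For part 2, I would again argue by contradiction, assuming $A_i\notin\mathcal{U}$ for every $1\leq i\leq n$. Applying part 1 to each $A_i$ produces $B_i\in\mathcal{U}$ with $\lambda(A_i\cap B_i)=0$. Setting $B:=\bigcap\limits_{i=1}^nB_i$, the finite-intersection property forces $B\in\mathcal{U}$, and $\lambda(A_i\cap B)\leq\lambda(A_i\cap B_i)=0$ for each $i$. Since $\bigcup\limits_{k=1}^nA_k\in\mathcal{U}$ and $B\in\mathcal{U}$, their intersection $\bigcup\limits_{k=1}^n(A_k\cap B)$ again lies in $\mathcal{U}$; but by subadditivity its measure is at most $\sum_{k=1}^n\lambda(A_k\cap B)=0$. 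This contradicts the positivity condition in the definition of a $\lambda$-filter, so $A_i\in\mathcal{U}$ for some $i$.

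The main obstacle is part 1, and specifically the construction of $\mathcal{F}$ together with the careful verification that it is a genuine $\lambda$-filter properly containing $\mathcal{U}$; the positivity requirement is the delicate point, and it is precisely there that the contradiction hypothesis $\lambda(A\cap B)>0$ is indispensable. Once part 1 is in hand, part 2 is routine, using only finite subadditivity of $\lambda$ and the finite-intersection closure of the ultrafilter.
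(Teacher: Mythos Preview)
Your proof is correct and follows essentially the same approach as the paper: for part~1 you build a $\lambda$-filter generated by $\mathcal{U}\cup\{A\}$ to contradict maximality, and for part~2 you intersect the witnesses from part~1 and use subadditivity, exactly as the paper does. Your construction of $\mathcal{F}$ as the upward closure $\{C\in\Sigma:C\supset A\cap B\text{ for some }B\in\mathcal{U}\}$ is in fact cleaner than the paper's $\{A\cap C:C\in\mathcal{U}\}\cup\mathcal{U}$, since yours is automatically closed under supersets.
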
 
\begin{proof}
1.If no such $B$ exists, then $\mathcal{F}:=\{A\cap C:C\in\mathcal{U}\}\cup\mathcal{U}$
is a $\lambda$-filter properly containing $\mathcal{U}$.\\
2. If $A_k\notin\mathcal{U}$ for every $1\leq k\leq n$, then by part 1, there exist $B_k\in\mathcal{U}$ for $1\leq k\leq n$ such that $\lambda(A_k\cap B_k)=0$. But then setting $B:=\bigcap\limits_{k=1}^n B_k$, we get
\begin{equation*}
    \lambda\left(\left(\bigcup\limits_{k=1}^nA_k\right)\cap B\right)\leq\sum\limits_{k=1}^n\lambda(A_k\cap B)=0,
\end{equation*}
violating the closure of $\mathcal{U}$ under finite intersections.
\end{proof}
\par
Let $\mathfrak{F}$ denote the collection of all $\lambda$-ultrafilters on $X$.
We now define the $\lambda$-ultrafilter corresponding to a 0-1 measure with respect to $\lambda$ and vice versa. Suppose $\mu\in\mathfrak{G}$. We define $\mathcal{U}_\mu$ by:
\begin{equation*}
    \mathcal{U}_\mu:=\{A\in\Sigma:\mu(A)=1\}.
\end{equation*}
Also for any $\lambda$-ultrafilter $\mathcal{U}$, let us define a set function $\mu^\mathcal{U}$ on $\Sigma$ by:
\begin{equation*}
    \mu^\mathcal{U}(A):=
    \begin{cases}
        1,~~A\in\mathcal{U},\\
        0,~~A\notin\mathcal{U}.
    \end{cases}
\end{equation*}
\begin{theorem}\label{representation2}
\strut\\
    1.For any $\mu\in\mathfrak{G}$, $\mathcal{U}_\mu\in\mathfrak{F}$ and is called the $\lambda$-ultrafilter corresponding to $\mu$. \\
    2. For any $\mathcal{U}\in\mathfrak{F}$, $\mu^\mathcal{U}\in\mathfrak{G}$ and is called the 0-1 measure with respect to $\lambda$ corresponding to $\mathcal{U}$.\\
    3. The maps $\mu\mapsto\mathcal{U}_\mu$, $\mu\in\mathfrak{G}$ and $\mathcal{U}\mapsto\mu^\mathcal{U}$, $\mathcal{U}\in\mathfrak{F}$ are inverse to each other and thereby establish a one to one correspondence between $\mathfrak{F}$ and $\mathfrak{G}$.
\end{theorem}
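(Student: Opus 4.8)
The plan is to verify the three assertions in turn, checking that each construction lands in the correct space and then observing that the two constructions undo each other almost by definition. The recurring technical fact I would isolate at the outset is that a finitely additive, $\{0,1\}$-valued set function $\mu$ is monotone (by nonnegativity together with finite additivity) and satisfies the inclusion-exclusion identity $\mu(A\cup B)+\mu(A\cap B)=\mu(A)+\mu(B)$. Both facts follow from decomposing unions into disjoint pieces.

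For part 1, I would first check that $\mathcal{U}_\mu$ obeys the three $\lambda$-filter axioms. Non-emptiness and the requirement $\lambda(A)>0$ on members are immediate from $\mu(X)=1$ and the defining property that $\mu$ annihilates $\lambda$-null sets. Upward closure (axiom 3) is exactly monotonicity, and closure under intersection (axiom 2) follows because if $\mu(A)=\mu(B)=1$ then the inclusion-exclusion identity gives $\mu(A\cup B)+\mu(A\cap B)=2$, forcing $\mu(A\cap B)=1$ since each term is at most $1$. To promote $\mathcal{U}_\mu$ to an ultrafilter I would argue by maximality: if a $\lambda$-filter strictly contained $\mathcal{U}_\mu$, pick a new member $A$ with $\mu(A)=0$; then finite additivity gives $\mu(X\setminus A)=1$, so $X\setminus A\in\mathcal{U}_\mu$ sits in the larger filter as well, and intersecting $A$ with $X\setminus A$ forces $\emptyset$ into it, contradicting axiom 1.

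For part 2, I would verify that $\mu^\mathcal{U}$ is a $0$-$1$ measure. It is $\{0,1\}$-valued by construction; $\mu^\mathcal{U}(X)=1$ because any member of the nonempty filter is contained in $X$, so axiom 3 places $X\in\mathcal{U}$; and $\mu^\mathcal{U}(A)=0$ when $\lambda(A)=0$ because axiom 1 keeps null sets out of $\mathcal{U}$. The one substantive point is finite additivity. Given disjoint $A,B\in\Sigma$, I would note that $A$ and $B$ cannot both lie in $\mathcal{U}$, since axiom 2 would then force $\emptyset\in\mathcal{U}$; if exactly one lies in $\mathcal{U}$, upward closure gives $A\cup B\in\mathcal{U}$ and the additivity equation reads $1=1+0$; and if neither lies in $\mathcal{U}$, then Lemma \ref{significant}(2) guarantees $A\cup B\notin\mathcal{U}$, so the equation reads $0=0+0$.

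Part 3 is then a bookkeeping check: $\mu^{\mathcal{U}_\mu}(A)=1$ precisely when $A\in\mathcal{U}_\mu$, i.e.\ precisely when $\mu(A)=1$, and since both functions are $\{0,1\}$-valued they coincide; symmetrically $A\in\mathcal{U}_{\mu^\mathcal{U}}$ precisely when $\mu^\mathcal{U}(A)=1$, i.e.\ precisely when $A\in\mathcal{U}$. I expect the only genuine obstacle to be finite additivity in part 2, where the decisive input is Lemma \ref{significant}(2); every other step reduces to monotonicity and the inclusion-exclusion identity for $\{0,1\}$-valued finitely additive functions together with the filter axioms.
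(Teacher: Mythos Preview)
Your proposal is correct and follows essentially the same route as the paper: the same monotonicity/additivity checks for part~1 (with only a cosmetic difference in how closure under intersection is deduced), the same case split for finite additivity in part~2 invoking Lemma~\ref{significant}(2), and the same tautological verification for part~3. The paper is in fact terser than you are, dismissing part~3 as ``an easy verification,'' so your write-up is slightly more explicit but not otherwise different.
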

\begin{proof}
1. It is easy to verify that for any $\mu\in\mathfrak{G}$, $\mathcal{U}_\mu$ does not contain any set having $\lambda$-measure zero. Further, since $\mu$ is a 0-1 measure, $\mu(A)=1$ and $B\supset A$, $B\in\Sigma$ forces $\mu(B)=1$. Now, if $\mu(A)=\mu(B)=1$, then $\mu(A\cup B)=1$ since $\mu$ is a 0-1 measure and hence as $\mu$ is additive, 
\begin{equation*}
    1=\mu(A\cup B)=\mu(A)+\mu(B\backslash A)=1+\mu(B\backslash A).
\end{equation*}
Hence $\mu(B\backslash A)=0$ and so by additivity of $\mu$,
\begin{equation*}
    \mu(A\cap B)=\mu(A\cap B)+\mu(B\backslash A)=\mu(B)=1,
\end{equation*}
giving $A\cap B\in\mathcal{U}_\mu$. Now if $\mathcal{F}$ is another $\lambda$-filter containing $\mathcal{U}_\mu$, then consider $C\in\mathcal{F}\backslash\mathcal{U}_\mu$. Clearly, $\mu(C)=0$. Then $\mu(X\backslash C)=1$ and hence $X\backslash C\in\mathcal{U}_\mu\subset\mathcal{F}$ violating, that $\mathcal{F}$ is a $\lambda$-filter.\\
2. Clearly, $\mu^\mathcal{U}(A)=0$ for $\lambda(A)=0$ since $A\notin\mathcal{U}$. Now, if $A, B\in\Sigma$ and $A\cap B=\emptyset$, either exactly one of $A$ and $B$ is in $\mathcal{U}$ in which case $A\sqcup B\in\mathcal{U}$, or neither $A$ nor $B$ is in $\mathcal{U}$ in which case by Lemma \ref{significant}, $A\sqcup B\notin\mathcal{U}$. Clearly, in both cases,
\begin{equation*}
    \mu^\mathcal{U}(A\sqcup B)=\mu^\mathcal{U}(A)+\mu^\mathcal{U}(B).
\end{equation*} 
Hence $\mu^\mathcal{U}$ is a 0-1 measure with respect to $\lambda$.\\
3. This part is an easy verification.
\end{proof}
We now study the integrals under 0-1 measures with respect to $\lambda$ in the light of this one to one correspondence. We introduce two new definitions.
\begin{definition}
    A non-empty subset $\mathcal{B}$ of $\Sigma$ is said to be a \textit{$\lambda$-filter base} if\\
    1. $\lambda(A)>0$ for every $A\in\mathcal{B}$.\\
    2. For every $A,B\in\mathcal{B}$, there exists $C\in\mathcal{B}$ such that $C\subset A\cap B$.
\end{definition}
Any $\lambda$-filter base $\mathcal{B}$ is contained in a unique minimal $\lambda$-filter given by 
\begin{equation*}
    \{B\in\Sigma:B\supset A~for~some~A\in\mathcal{B}\}.
\end{equation*}
Since every $\lambda$-filter is contained in a $\lambda$-ultrafilter (a direct application of Zorn's lemma), every $\lambda$-filter base is contained in a $\lambda$-ultrafilter.
We now define limit under a $\lambda$-filter.
\begin{definition}
    Suppose $\mathcal{F}$ is a $\lambda$-filter on $X$ and $f:X\to\mathbb{K}$ is a measurable function. Then the \textit{limit of the map $f$ under the $\lambda$-filter $\mathcal{F}$} (written as $\lim\limits_\mathcal{F}f$) is defined as $z_0\in\mathbb{K}$ if 
    \begin{equation*}
        \{x\in X:|f(x)-z_0|<\epsilon\}\in\mathcal{F}~~for~every~\epsilon>0.
    \end{equation*}
\end{definition}
We state a few elementary results pertaining to the limit of a measurable function under a $\lambda$-filter. We omit the proofs since the results follow directly from the definition of the limit.
\begin{theorem}
    Suppose $f:X\to\mathbb{K}$ is a measurable function and $\mathcal{F}$ is a $\lambda$-filter on $X$.\\
    1. $\lim\limits_{\mathcal{F}}f$ if exists is unique.\\
    2. If $g:\mathbb{K}\to\mathbb{K}$ is continuous, $\lim\limits_{\mathcal{F}}g\circ f=g\left(\lim\limits_\mathcal{F}f\right)$.\\
    3. Limits under a $\lambda$-filter respect addition, multiplication, division and multiplication with a constant.\\
\end{theorem}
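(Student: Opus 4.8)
The plan is to derive all three parts directly from the two defining closure properties of a $\lambda$-filter---closure under finite intersection (property 2) and upward closure (property 3)---together with the single crucial observation that $\emptyset\notin\mathcal{F}$, which holds because $\lambda(\emptyset)=0$ while every member of $\mathcal{F}$ has positive measure. Throughout, for a measurable $\phi:X\to\mathbb{K}$, a point $z\in\mathbb{K}$ and $\epsilon>0$ I would abbreviate the basic sublevel set as $A_\epsilon(\phi,z):=\{x\in X:|\phi(x)-z|<\epsilon\}$, so that by definition $\lim_{\mathcal{F}}\phi=z$ exactly when $A_\epsilon(\phi,z)\in\mathcal{F}$ for every $\epsilon>0$. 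The recurring mechanism in every part is the same: intersect finitely many filter sets to land inside a target sublevel set, and then invoke upward closure to conclude the target set itself lies in $\mathcal{F}$.

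For part 1, suppose $\lim_{\mathcal{F}}f=z_0$ and $\lim_{\mathcal{F}}f=z_1$ with $z_0\neq z_1$, and put $\epsilon=\tfrac12|z_0-z_1|>0$. Then both $A_\epsilon(f,z_0)$ and $A_\epsilon(f,z_1)$ belong to $\mathcal{F}$, hence so does their intersection by property 2; but the triangle inequality forces that intersection to be empty, contradicting $\emptyset\notin\mathcal{F}$. For part 2, first note that $g\circ f$ is measurable since $g$ is continuous and $f$ is measurable, so $\lim_{\mathcal{F}}(g\circ f)$ is meaningful. Writing $z_0=\lim_{\mathcal{F}}f$ and fixing $\epsilon>0$, continuity of $g$ at $z_0$ yields $\delta>0$ with $|g(w)-g(z_0)|<\epsilon$ whenever $|w-z_0|<\delta$; consequently $A_\delta(f,z_0)\subset A_\epsilon(g\circ f,g(z_0))$, and since the left-hand set lies in $\mathcal{F}$, upward closure places the right-hand set in $\mathcal{F}$ as well.

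Part 3 follows the same template for addition, scalar multiplication, and multiplication. Writing $a=\lim_{\mathcal{F}}f$ and $b=\lim_{\mathcal{F}}h$, for the sum I would intersect $A_{\epsilon/2}(f,a)$ and $A_{\epsilon/2}(h,b)$ and apply the triangle inequality to land inside $A_\epsilon(f+h,a+b)$; scalar multiplication by a constant $c$ is just part 2 applied to the continuous map $w\mapsto cw$; and for the product one uses the standard estimate $|fh-ab|\le|h|\,|f-a|+|a|\,|h-b|$, choosing $\delta$ small enough (depending on $a,b,\epsilon$) that the intersection $A_\delta(f,a)\cap A_\delta(h,b)$ sits inside $A_\epsilon(fh,ab)$, then closing upward. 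The one genuinely delicate case---and the step I expect to be the main obstacle---is division, because the reciprocal map is not continuous on all of $\mathbb{K}$. Here I would first exploit $b\neq0$: taking $\epsilon=|b|/2$ shows $A_{|b|/2}(h,b)\in\mathcal{F}$, and on this set $|h|>|b|/2$, so $h$ is bounded away from zero along the filter and $1/h$ is unambiguously defined (and measurable) there. Restricting to this filter set, the reciprocal map $w\mapsto 1/w$ is continuous on $\{|w|>|b|/2\}$, so the part-2 argument applies to give $\lim_{\mathcal{F}}(1/h)=1/b$, and combining this with the product rule yields $\lim_{\mathcal{F}}(f/h)=a/b$.
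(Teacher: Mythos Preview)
Your proposal is correct and matches the paper's intent: the paper omits the proof entirely, stating only that ``the results follow directly from the definition of the limit,'' and what you have written is precisely the standard filter-limit argument that fills in those details. The paper gives no alternative route to compare against, so there is nothing further to add.
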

We now come to our second key result of this subsection.
\begin{theorem}\label{integral representation}
    Suppose $f\in L_\infty(X)$.\\
    1. If $\mathcal{U}$ is a $\lambda$-ultrafilter, $\lim\limits_\mathcal{U}f$ exists and is well defined.\\
    2. If $\mu\in\mathfrak{G}$, 
    \begin{equation*}
        \lim\limits_{\mathcal{U}_\mu}f=\int\limits_Xfd\mu.
    \end{equation*}
\end{theorem}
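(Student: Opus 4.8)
The plan is to establish the two parts separately, leaning on the ultrafilter arithmetic of Lemma \ref{significant} for the existence in part 1 and reducing to simple functions for part 2. For part 1, I would first reduce to the case where $f$ is bounded everywhere: since $f\in L_\infty(X)$ is essentially bounded, modifying $f$ on the null set where $|f|>\|f\|_\infty$ alters each set $\{x:|f(x)-z_0|<\epsilon\}$ only by a null set. Because no null set belongs to a $\lambda$-ultrafilter $\mathcal{U}$ (property 1 of a $\lambda$-filter) while $X\setminus N\in\mathcal{U}$ for every null $N$ (apply Lemma \ref{significant}(2) to $X=N\cup(X\setminus N)$ together with closure under finite intersection), membership of such a set in $\mathcal{U}$ is unaffected; this simultaneously gives the \emph{well-definedness} claimed in part 1, i.e. independence of the representative. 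Assuming now $f(X)\subset K$ for some compact $K\subset\mathbb{K}$, I would locate the limit as follows: for each $n$ cover $K$ by finitely many open balls of radius $1/n$, whose preimages cover $X\in\mathcal{U}$, so by Lemma \ref{significant}(2) some ball $B(c_n,1/n)$ satisfies $f^{-1}(B(c_n,1/n))\in\mathcal{U}$. Since any two members of $\mathcal{U}$ meet in a set of positive measure (hence nonempty), $f^{-1}(B(c_n,1/n))\cap f^{-1}(B(c_m,1/m))\neq\emptyset$ forces $|c_n-c_m|<\tfrac1n+\tfrac1m$, so $(c_n)$ is Cauchy with some limit $z_0$ satisfying $|c_n-z_0|\le 1/n$. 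Then $f^{-1}(B(c_n,1/n))\subseteq\{x:|f(x)-z_0|<2/n\}$, and the superset axiom yields $\{x:|f(x)-z_0|<\epsilon\}\in\mathcal{U}$ for every $\epsilon>0$; thus $\lim_{\mathcal{U}}f=z_0$ exists, uniqueness having already been recorded.

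For part 2, set $z_0:=\lim_{\mathcal{U}_\mu}f$ and $w:=\int_X f\,d\mu$, and fix $\epsilon>0$. I would choose a simple function $s=\sum_{i=1}^m c_i\chi_{A_i}$, with $\{A_i\}$ a measurable partition of $X$, such that $\|f-s\|_\infty<\epsilon$, and then exploit that $\mu$ is a $0$-$1$ measure with $\mu(X)=1$: finite additivity forces exactly one block $A_{i_0}$ to satisfy $\mu(A_{i_0})=1$, i.e. $A_{i_0}\in\mathcal{U}_\mu$, whence $\int_X s\,d\mu=c_{i_0}$ and $|w-c_{i_0}|\le\int_X|f-s|\,d\mu\le\epsilon$. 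On $A_{i_0}$, after deleting the null set where $|f-s|\ge\epsilon$ (which keeps the set in $\mathcal{U}_\mu$ by the argument above), one has $|f-c_{i_0}|<\epsilon$, so $\{x:|f(x)-c_{i_0}|<\epsilon\}\in\mathcal{U}_\mu$. Intersecting this with $\{x:|f(x)-z_0|<\epsilon\}\in\mathcal{U}_\mu$ produces a point witnessing $|c_{i_0}-z_0|<2\epsilon$, and combining with $|w-c_{i_0}|\le\epsilon$ gives $|z_0-w|<3\epsilon$. Letting $\epsilon\to0$ yields $z_0=w$, as required.

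The main obstacle is the existence step in part 1, namely pinning down the limit point. The device that makes it clean is that every member of a $\lambda$-ultrafilter has positive measure and is therefore nonempty, which forces the finitely many ball-preimages selected at successive scales to have \emph{consistent} centres, producing a Cauchy sequence; the finite-subcover step needs total boundedness of $K$, and this is precisely where the essential boundedness of $f$ enters. The remaining bookkeeping — null-set invariance and the elementary estimate $\bigl|\int_X g\,d\mu\bigr|\le\|g\|_\infty$ for integration against the finitely additive $0$-$1$ measure $\mu$ — is routine.
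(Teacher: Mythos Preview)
Your argument is correct in both parts, but in each part you take a route that differs from the paper's.

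For part 1, the paper argues by contradiction: if no $z_0\in\{|z|\le\|f\|_\infty\}$ works, then each $z$ admits a ball $B_z$ with $f^{-1}(B_z)\notin\mathcal{U}$; compactness gives finitely many such balls whose preimages cover $X$ up to a null set, contradicting Lemma~\ref{significant}(2). Your construction instead produces the limit \emph{directly}, picking at each scale $1/n$ a ball whose preimage lies in $\mathcal{U}$ and showing the centres are Cauchy because members of $\mathcal{U}$ have nonempty pairwise intersection. Both arguments use the same ingredients (compactness of the essential range plus Lemma~\ref{significant}(2)); yours has the mild advantage of exhibiting the limit explicitly, while the paper's is shorter.

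For part 2, your simple-function detour works but is considerably more laborious than necessary. The paper observes that $\lim_{\mathcal{U}_\mu}f=z_0$ means $\mu(\{|f-z_0|<\epsilon\})=1$ for every $\epsilon>0$, hence $\mu(\{|f-z_0|\ge\epsilon\})=0$, and then simply computes
\[
\Bigl|\int_X f\,d\mu - z_0\Bigr|\;\le\;\int_X|f-z_0|\,d\mu\;=\;\int_{\{|f-z_0|<\epsilon\}}|f-z_0|\,d\mu\;\le\;\epsilon.
\]
This one-line estimate replaces your entire apparatus of approximating by $s$, isolating the block $A_{i_0}$, and triangulating through $c_{i_0}$; it uses nothing beyond the definition of $\mathcal{U}_\mu$ and the elementary bound you already invoked.
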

\begin{proof}
1. Clearly $\lim\limits_\mathcal{U}f$, if exists, must lie in the set $\mathcal{D}:=\{z\in\mathbb{K}:|z|\leq\|f\|_\infty\}$. Now, suppose $\lim\limits_\mathcal{U}f$ does not exist. Then for every $z\in\mathcal{D}$, there exists $\epsilon_z>0$ such that 
\begin{equation*}
    \{x\in X:|f(x)-z|<\epsilon_z\}\notin\mathcal{U}.
\end{equation*}
Set $B_z:=\{w\in\mathbb{K}:|w-z|<\epsilon_z\}$. Then $\{B_z:z\in\mathcal{D}\}$ is an open cover of the compact set $\mathcal{D}$ and therefore must have a finite sub-cover, say $\{B_{z_1},B_{z_2},\dots,B_{z_n}\}$. We further set 
\begin{equation*}
    A_i:=\{x\in X: |f(x)-z_i|<\epsilon_{z_i}\},~~1\leq i\leq n.
\end{equation*}
Hence clearly, $A_i\notin\mathcal{U}$ and 
\begin{equation*}
    \bigcup\limits_{i=1}^nA_i=X\backslash B,~for~some~B\in\Sigma,~\lambda(B)=0.
\end{equation*}
Thus by Lemma \ref{significant}, $X\backslash B\notin\mathcal{U}$. But then as $B\notin\mathcal{U}$, we arrive at a contradiction.\par 
In order to prove that the limit is well defined, consider $f$ and $f'$ essentially bounded such that $f=f'$ almost everywhere on $X$ with respect to $\lambda$. Now, if $\lim\limits_\mathcal{U}f=z_0$, then for any $\epsilon>0$, 
\begin{equation*}
    \{x\in X:|f(x)-z_0|<\epsilon\}\subset\{x\in X:|f'(x)-z_0|<\epsilon\}\cup\{x\in X: f(x)\neq f'(x)\}.
\end{equation*}
Now by Lemma \ref{significant}, $\{x\in X:|f'(x)-z_0|<\epsilon\}\in\mathcal{U}$ since $\lambda\left(\{x\in X: f(x)\neq f'(x)\}\right)=0$.\\
2. Suppose $\lim\limits_{\mathcal{U}_\mu}f=z_0$. Then for every $\epsilon>0$, 
\begin{equation*}
    \mu\left(\{x\in X:|f(x)-z_0|<\epsilon\}\right)=1,
\end{equation*}
and therefore
\begin{equation*}
    \mu\left(\{x\in X:|f(x)-z_0|\geq\epsilon\}\right)=0.
\end{equation*}
Hence we obtain that
\begin{align*}
    \left|\int\limits_Xfd\mu-z_0\right| \leq \int\limits_X|f-z_0|d\mu=\int\limits_{|f-z_0|<\epsilon}|f-z_0|d\mu
    \leq\epsilon.
\end{align*}
Since $\epsilon$ is arbitrary, 
\begin{equation*}
    \int\limits_Xfd\mu=z_0.
\end{equation*}

\end{proof}

We now establish a result that gives a collection of possible values of limits of an essentially bounded function under a $\lambda$-ultrafilter. 
\begin{theorem}\label{integral and limit}
    Suppose $f,g\in L_\infty(X)$. \\
    1. For any $z_0\in\mathbb{K}$, there exists a $\lambda$-ultrafilter $\mathcal{U}$ such that $\lim\limits_\mathcal{U}f=c_0$ if and only if for every $\epsilon>0$, 
    \begin{equation*}
        \lambda\left(\{x\in X:|f(x)-z_0|<\epsilon\}\right)>0.
    \end{equation*}
    2. For any $z_0,w_0\in\mathbb{K}$, there exists a $\lambda$-ultrafilter $\mathcal{U}$ such that $\lim\limits_\mathcal{U}f=z_0$ and $\lim\limits_\mathcal{U}g=w_0$ if and only if for any $\epsilon>0$, 
    \begin{equation*}
        \lambda\left(\{x\in X:|f(x)-z_0|<\epsilon,|g(x)-w_0|<\epsilon\}\right)>0.
    \end{equation*}
\end{theorem}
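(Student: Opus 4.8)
The plan is to treat both parts by the same two-step template: the necessity direction is immediate from the defining axioms of a $\lambda$-ultrafilter, whereas the sufficiency direction is obtained by producing an explicit $\lambda$-filter base and invoking the fact (noted earlier in this subsection) that every $\lambda$-filter base is contained in a $\lambda$-ultrafilter.

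For the necessity in part 1, I would argue as follows. Suppose $\mathcal{U}$ is a $\lambda$-ultrafilter with $\lim_\mathcal{U} f = z_0$. Writing $A_\epsilon := \{x \in X : |f(x) - z_0| < \epsilon\}$, the definition of the limit under a $\lambda$-filter gives $A_\epsilon \in \mathcal{U}$ for every $\epsilon > 0$; since axiom~1 of a $\lambda$-filter forces each member of $\mathcal{U}$ to have positive $\lambda$-measure, we conclude $\lambda(A_\epsilon) > 0$. For part 2 the only additional ingredient is closure under finite intersection: with $B_\epsilon := \{x \in X : |g(x) - w_0| < \epsilon\}$, both $A_\epsilon$ and $B_\epsilon$ lie in $\mathcal{U}$, hence so does $A_\epsilon \cap B_\epsilon = \{x \in X : |f(x)-z_0|<\epsilon,\,|g(x)-w_0|<\epsilon\}$, which therefore has positive measure.

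For the sufficiency in part 1, I would set $\mathcal{B} := \{A_\epsilon : \epsilon > 0\}$. The hypothesis makes every $A_\epsilon$ positive-measured, and since $A_{\min(\epsilon_1,\epsilon_2)} \subset A_{\epsilon_1} \cap A_{\epsilon_2}$, the family $\mathcal{B}$ is a $\lambda$-filter base. Extending $\mathcal{B}$ to a $\lambda$-ultrafilter $\mathcal{U}$ and noting that $A_\epsilon \in \mathcal{U}$ for all $\epsilon > 0$ yields $\lim_\mathcal{U} f = z_0$ by definition. For part 2 I would instead use $C_\epsilon := A_\epsilon \cap B_\epsilon$; the hypothesis makes each $C_\epsilon$ positive-measured and $C_{\min(\epsilon_1,\epsilon_2)} \subset C_{\epsilon_1} \cap C_{\epsilon_2}$, so $\{C_\epsilon : \epsilon > 0\}$ is again a $\lambda$-filter base, extendable to an ultrafilter $\mathcal{U}$. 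Because $C_\epsilon \subset A_\epsilon$ and $C_\epsilon \subset B_\epsilon$, upward closure (axiom~3 of a $\lambda$-filter) places both $A_\epsilon$ and $B_\epsilon$ in $\mathcal{U}$, giving $\lim_\mathcal{U} f = z_0$ and $\lim_\mathcal{U} g = w_0$ simultaneously.

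The argument has no genuinely hard step; it is essentially a verification. The only point deserving care is confirming that the downward-nested families $\{A_\epsilon\}$ and $\{C_\epsilon\}$ really satisfy the $\lambda$-filter base axioms, so that the passage to a $\lambda$-ultrafilter is legitimate. The conceptual content is simply that the stated measure-positivity condition is precisely what guarantees the approximating sets form a filter base whose finite intersections have positive $\lambda$-measure.
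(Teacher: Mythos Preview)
Your proposal is correct and follows essentially the same approach as the paper: the paper likewise dismisses the necessity as clear and, for sufficiency, builds the identical $\lambda$-filter bases $\mathcal{B}=\{A_\epsilon:\epsilon>0\}$ and $\mathcal{B}'=\{A_\epsilon\cap B_\epsilon:\epsilon>0\}$ and extends them to $\lambda$-ultrafilters. If anything, your write-up is more explicit than the paper's, since you spell out both the necessity direction and the verification that these families satisfy the filter-base axioms.
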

\begin{proof}
The necessary part of both the statements are clear. We therefore prove the sufficiency in the two statements.\\
1. Consider $\mathcal{B}\subset\Sigma$ given by
\begin{equation*}
    \mathcal{B}:=\left\{\{x\in X:|f(x)-z_0|<\epsilon\},~~\epsilon>0\right\}.
\end{equation*}
Clearly, $\mathcal{B}$ is a $\lambda$-filter base and hence there exists a $\lambda$-ultrafilter $\mathcal{U}$ containing $\mathcal{B}$. Clearly, by construction, $\lim\limits_\mathcal{U}f=z_0$.\\
2. Again consider $\mathcal{B}'\subset\Sigma$ given by
\begin{equation*}
    \mathcal{B}':=\left\{\{x\in X:|f(x)-z_0|<\epsilon\}\cap\{x\in X:|g(x)-w_0|<\epsilon\},~~\epsilon>0\right\}.
\end{equation*}
Clearly, $\mathcal{B}'$ too is a $\lambda$-filter base and hence there exists a $\lambda$-ultrafilter $\mathcal{U}'$ containing $\mathcal{B}'$. Also, by construction, clearly, $\lim\limits_{\mathcal{U}'}f=z_0$ and $\lim\limits_{\mathcal{U}'}g=w_0$.
\end{proof}

\subsection{Birkhoff-James orthogonality in $L_\infty(X)$}\hfill
\\

In this subsection we characterize Birkhoff-James orthogonality between two elements of $L_\infty(X)$ and use the characterization to study the smoothness of a point in $L_\infty(X)$. \par
The following characterization of orthogonality follows from Theorems \ref{ortho}, \ref{representation}, \ref{integral representation} and \ref{integral and limit}.
\begin{theorem}
    Suppose $f,g\in L_\infty(X)$ are non-zero. Then $f\perp_Bg$ if and only if
    \[0\in\conv\left\{z:\lambda\left(\{x\in X: |f(x)|>\|f\|_\infty-\epsilon,~|\overline{f(x)}g(x)-z|<\epsilon\} \right) >0~~\forall~\epsilon>0\right\}.\]
\end{theorem}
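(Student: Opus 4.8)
The plan is to transfer the question to the compact space $\left(\mathfrak{G},t\right)$ through the isometric isomorphism $T$ of Theorem \ref{representation} and then apply the $C_0$-characterization of Theorem \ref{ortho}. Since $T$ is an isometric isomorphism, it preserves both the norm and Birkhoff-James orthogonality, so $f\perp_Bg$ in $L_\infty(X)$ is equivalent to $Tf\perp_BTg$ in $C\left(\mathfrak{G},t\right)$, and $\|Tf\|=\|f\|_\infty$. As $\mathfrak{G}$ is compact, $C_0\left(\mathfrak{G},t\right)=C\left(\mathfrak{G},t\right)$, and Theorem \ref{ortho} gives that $Tf\perp_BTg$ if and only if $0\in\conv\{\overline{Tf(\mu)}Tg(\mu):\mu\in M_{Tf}\}$, where $M_{Tf}=\{\mu\in\mathfrak{G}:|Tf(\mu)|=\|f\|_\infty\}$. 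First I would rewrite each ingredient of this condition in the language of $\lambda$-ultrafilters: by Theorem \ref{integral representation}, $Tf(\mu)=\lim_{\mathcal{U}_\mu}f$ and $Tg(\mu)=\lim_{\mathcal{U}_\mu}g$, and by the elementary continuity and multiplicativity of $\lambda$-filter limits recorded above, $\overline{Tf(\mu)}Tg(\mu)=\lim_{\mathcal{U}_\mu}\left(\overline{f}g\right)$. Using the correspondence of Theorem \ref{representation2}, the set appearing in Theorem \ref{ortho} therefore becomes $\left\{\lim_{\mathcal{U}}\left(\overline{f}g\right):\mathcal{U}\in\mathfrak{F},\ |\lim_{\mathcal{U}}f|=\|f\|_\infty\right\}$.

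Everything then reduces to proving that this set of limiting values equals the set $S:=\{z:\lambda(\{x:|f(x)|>\|f\|_\infty-\epsilon,\ |\overline{f(x)}g(x)-z|<\epsilon\})>0\ \forall\epsilon>0\}$ from the statement; once this equality holds, taking convex hulls on both sides finishes the proof. The inclusion of the limit-value set into $S$ is routine: given $\mathcal{U}$ with $w_0:=\lim_{\mathcal{U}}f$, $|w_0|=\|f\|_\infty$, and $z:=\lim_{\mathcal{U}}\left(\overline{f}g\right)$, the necessity in Theorem \ref{integral and limit}(2), applied to $f$ and $\overline{f}g$ with target values $w_0$ and $z$, gives $\lambda(\{|f-w_0|<\epsilon,\ |\overline{f}g-z|<\epsilon\})>0$ for every $\epsilon$; since $|f(x)-w_0|<\epsilon$ forces $|f(x)|>\|f\|_\infty-\epsilon$, this set sits inside the defining set of $S$, so $z\in S$.

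The main obstacle is the reverse inclusion, showing that every $z\in S$ arises as such a limit: here the hypothesis only controls the modulus $|f|$ near $\|f\|_\infty$ (an annular condition), whereas Theorem \ref{integral and limit}(2) requires $f$ to converge to one fixed boundary value $w_0$. To bridge this I would argue by compactness of the sphere $C:=\{w\in\mathbb{K}:|w|=\|f\|_\infty\}$. Fix $z\in S$ and suppose, for contradiction, that no $w_0\in C$ satisfies $\lambda(\{|f-w_0|<\delta,\ |\overline{f}g-z|<\delta\})>0$ for all $\delta>0$; then each $w\in C$ admits a radius $\rho_w>0$ whose corresponding set is $\lambda$-null. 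The balls $\{|v-w|<\rho_w\}$ cover $C$; extracting a finite subcover and using a Lebesgue-number estimate, I would find $\eta>0$ so that the $\eta$-neighbourhood of $C$ is covered by finitely many of these balls. Choosing $\epsilon$ smaller than $\eta$ and than all the chosen radii, every point $x$ of the positive-measure set $\{|f(x)|>\|f\|_\infty-\epsilon,\ |\overline{f(x)}g(x)-z|<\epsilon\}$ has $f(x)$ within $\epsilon$ of $C$, hence inside one of the finitely many null sets, forcing that set to be $\lambda$-null --- contradicting $z\in S$. This produces the required $w_0\in C$, and Theorem \ref{integral and limit}(2) then yields a $\lambda$-ultrafilter $\mathcal{U}$ with $\lim_{\mathcal{U}}f=w_0$ and $\lim_{\mathcal{U}}\left(\overline{f}g\right)=z$, exhibiting $z$ as a limit value and completing the equality of the two sets.
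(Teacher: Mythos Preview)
Your proof is correct and follows the same route the paper sketches (invoking Theorems~\ref{ortho}, \ref{representation}, \ref{integral representation} and \ref{integral and limit}). One observation shortens the reverse inclusion considerably: since $\lambda$-filter limits commute with continuous functions, $|\lim_{\mathcal{U}}f|=\|f\|_\infty$ is equivalent to $\lim_{\mathcal{U}}|f|=\|f\|_\infty$, so you may apply Theorem~\ref{integral and limit}(2) to the pair $(|f|,\overline{f}g)$ rather than $(f,\overline{f}g)$. The condition $\big||f(x)|-\|f\|_\infty\big|<\epsilon$ that comes out is, up to a $\lambda$-null set, exactly $|f(x)|>\|f\|_\infty-\epsilon$, so both inclusions follow immediately from the two directions of Theorem~\ref{integral and limit}(2) and the compactness argument on the sphere becomes unnecessary. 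Your detour works, but this is presumably what the paper has in mind when it says the result ``follows'' from those four theorems.
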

We now come to the characterization of smooth points in $L_\infty(X)$, but before this result, we prove a preliminary lemma.
\begin{lemma}\label{partition}
    If $f\in L_\infty(X)$ and 
    \begin{equation*}
        \lambda\left(\{x\in X:|f(x)|=\|f\|_\infty\}\right)=0,
    \end{equation*}
    there exist $A,B\subset X$ such that $A\cap B=\emptyset$ and 
    \begin{equation*}
        \lambda\left(\{x\in A: \|f\|-|f(x)|<\epsilon\}\right),~\lambda\left(\{x\in B: \|f\|_\infty-|f(x)|<\epsilon\}\right)>0,
    \end{equation*}
    for every $\epsilon>0$.
\end{lemma}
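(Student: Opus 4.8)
The plan is to exploit the two defining features of the essential supremum $M := \|f\|_\infty$. First, since $M$ is an \emph{upper} bound, $\lambda(\{x : |f(x)| > M\}) = 0$, and combined with the hypothesis $\lambda(\{x : |f(x)| = M\}) = 0$ this gives $|f| < M$ almost everywhere. Second, since $M$ is the \emph{least} essential upper bound, $\lambda(\{x : |f(x)| > M - \epsilon\}) > 0$ for every $\epsilon > 0$; equivalently $\lambda(\{x : M - \epsilon < |f(x)| < M\}) > 0$, using that $\{|f| \geq M\}$ is null. The whole difficulty is that $\lambda$ may be infinite and non-$\sigma$-finite, so I cannot appeal to continuity of the measure from above; the argument must stay purely combinatorial.

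First I would slice the region where $|f|$ is near $M$ into disjoint shells. For $n \geq 1$ set $C_n := \{x \in X : M - \tfrac1n < |f(x)| \le M - \tfrac{1}{n+1}\}$. These are measurable and pairwise disjoint, and $\bigcup_{n \geq N} C_n = \{x : M - \tfrac1N < |f(x)| < M\}$, which has the same positive measure as $\{|f| > M - \tfrac1N\}$ since $\{|f| \geq M\}$ is null. By countable subadditivity this forces infinitely many shells to carry positive measure: if only finitely many $C_n$ had $\lambda(C_n) > 0$, then for $N$ past the last such index we would get $\lambda(\bigcup_{n \geq N} C_n) \le \sum_{n \geq N}\lambda(C_n) = 0$, a contradiction. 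Let $n_1 < n_2 < \cdots$ enumerate the indices with $\lambda(C_{n_k}) > 0$.

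Then I would separate these positive-measure shells by parity, setting $A := \bigcup_k C_{n_{2k-1}}$ and $B := \bigcup_k C_{n_{2k}}$; these are disjoint since the $C_n$ are. The key numerical observation is that $x \in C_n$ forces $M - |f(x)| < \tfrac1n$, so each shell lies inside the target region $\{M - |f| < \epsilon\}$ as soon as its index exceeds $1/\epsilon$. Concretely, given $\epsilon > 0$, pick $k$ with $n_{2k-1} > 1/\epsilon$; then $C_{n_{2k-1}} \subseteq \{x \in A : M - |f(x)| < \epsilon\}$, a set of positive measure, and the same argument with $n_{2k}$ handles $B$. Since $\epsilon$ was arbitrary, both required inequalities hold.

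The only place demanding care is the middle step, and its subtlety is conceptual rather than computational: one must resist the temptation to track $\lambda(\{|f| > M - \epsilon\})$ as $\epsilon \to 0$ (which need not tend to $0$, nor even be finite), and instead observe that the tail unions $\bigcup_{n \geq N} C_n$ having positive measure for \emph{every} $N$ is exactly the pigeonhole input needed to produce infinitely many fat shells. Everything else is routine measurability together with the elementary bound $M - |f| < 1/n$ on $C_n$.
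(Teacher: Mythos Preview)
Your argument is correct. The shell decomposition $C_n=\{x:M-\tfrac1n<|f(x)|\le M-\tfrac1{n+1}\}$ does indeed partition $\{0<M-|f|<1\}$, the pigeonhole step via countable subadditivity is valid for an arbitrary measure, and the parity split then finishes the job; nothing hinges on finiteness or $\sigma$-finiteness of $\lambda$.

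Your route is in fact cleaner than the paper's. The paper distinguishes two cases according to whether $\lambda(\{x:\|f\|_\infty-|f(x)|<\epsilon\})$ is finite for some $\epsilon>0$. In the finite case it invokes continuity from above (so that these measures tend to~$0$) and carries out a rather delicate recursive construction, choosing $D_n\subset C_n$ with $0<\lambda(D_n)\le\tfrac12\lambda(C_n)$ and then shrinking $\epsilon_n$ so that the next level set has measure below $\tfrac13\lambda(D_n)$; the complement computation at the end is where this arithmetic pays off. In the infinite case the paper simply asserts the existence of a decreasing sequence $\epsilon_n\to0$ with $\lambda(\{x:\|f\|_\infty-|f(x)|\in(\epsilon_{n+1},\epsilon_n)\})>0$ and then alternates odd/even indices, which is exactly your idea with a non-explicit sequence. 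Your observation that the tail unions $\bigcup_{n\ge N}C_n$ always have positive measure (regardless of whether the full level sets are finite) shows that this second argument works uniformly, making the case split and the Case~1 machinery unnecessary. What the paper's approach buys is nothing you need here; what yours buys is a single short argument with no side hypotheses.
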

\begin{proof}
Since $\lambda\left(\{x\in X:|f(x)|=\|f\|_\infty\right\})=0$, clearly either, \[\lambda\left(\{x\in X:\|f\|_\infty-|f(x)|<\epsilon\}\right)<\infty,\] 
for some $\epsilon>0$ and 
\[\lambda\left(\{x\in X:\|f\|_\infty-|f(x)|<\delta\}\right)\to0,\] 
as $\delta\to0$ or, 
\[\lambda\left(\{x\in X:\|f\|_\infty-|f(x)|<\epsilon\}\right)=\infty\] 
for every $\epsilon>0$.\\
\textbf{Case 1:}
\textit{$\lambda\left(\{x\in X:\|f\|_\infty-|f(x)|<\epsilon_0\}\right)<\infty$ for some $\epsilon_0>0$.}\\
For $n\in\mathbb{N}$, we set $C_n=\{x\in X:\|f\|_\infty-|f(x)|<\epsilon_{n-1}\}$ and $D_n\subset C_n$ such that
\begin{equation*}
    0<\lambda(D_n)\leq\frac{1}{2}\lambda(C_n).
\end{equation*}
We further consider $\epsilon_n>0$ such that
\begin{equation*}
    \lambda\left(\{x\in X:\|f\|_\infty-|f(x)|<\epsilon_n\}\right)<\frac{1}{3}\lambda(D_n).
\end{equation*}
Finally, define
\begin{equation*}
    A_n:=D_n\backslash C_{n+1},~~A:=\bigsqcup\limits_{n=1}^\infty A_n,~~B:=X\backslash A.
\end{equation*}
Then clearly, since $\lambda(A_n)>0$ for every $n\in\mathbb{N}$ and $\lim\limits_{n\to\infty}\epsilon_n=0$, we have
\begin{equation*}
    \lambda\left(\{x\in A: \|f\|-|f(x)|<\epsilon\}\right)>0,
\end{equation*}
for every $\epsilon>0$. Again for any $\epsilon>0$, there exists $n\in\mathbb{N}$ such that $\epsilon>\epsilon_{n-1}$. Observe that
\begin{align*}
    \lambda\left(\{x\in B: \|f\|-|f(x)|<\epsilon\}\right)&\geq\lambda\left(\{x\in B: \|f\|-|f(x)|<\epsilon_{n-1}\}\right)\\
    &=\lambda\left(C_n\backslash\left(\bigsqcup\limits_{k\geq n}A_k\right)\right)\\
    &=\lambda(C_n)-\sum\limits_{n\geq k}\lambda(A_k)\\
    &\geq \lambda(C_n)-\sum\limits_{k\geq n}\lambda(D_k)\\
    &\geq \lambda(C_n)-\sum\limits_{k=0}^\infty \lambda(D_n)\frac{1}{3^k}=\lambda(C_n)-\frac{3}{2}\lambda(D_n)>0.
\end{align*}
\textbf{Case 2:}
\textit{$\lambda\left(\{x\in X:\|f\|_\infty-|f(x)|>\epsilon\}\right)=\infty$ for every $\epsilon>0$.}\\
Then for every $n\in\mathbb{N}$, there exists $\epsilon_n>0$ such that $\epsilon_n>\epsilon_{n+1}$ and $\lim\limits_{n\to\infty}\epsilon_n=0$ with
\begin{equation*}
    \lambda\left(\{x\in X: \|f\|_\infty-|f(x)|\in(\epsilon_{n+1},\epsilon_n)\}\right)>0.
\end{equation*}
Hence setting
\begin{equation*}
    A:=\left\{x\in X: \|f\|_\infty-|f(x)|\in\left(\epsilon_{2n},\epsilon_{2n-1}\right),~n\in\mathbb{N}\right\},
\end{equation*}
and
\begin{equation*}
    B:=\left\{x\in X: \|f\|_\infty-|f(x)|\in\left(\epsilon_{2n+1},\epsilon_{2n}\right),~n\in\mathbb{N}\right\},
\end{equation*}
gives us the desired subsets of $X$.
\end{proof}
We now come to the characterization of smooth points in $L_\infty(X)$ but for that, we require the definition of a $\lambda$-atom.
\begin{definition}
    A subset $A\in\Sigma$ is called a \textit{$\lambda$-atom} if $\lambda(A)>0$ and 
    \begin{equation*}
        B\subset A,~\lambda(B)>0~\Rightarrow~B=A.
    \end{equation*}
\end{definition}
\begin{theorem}\label{smooth infinity 2}
    An element $f\in L_\infty(X)$ is smooth if and only if there exists a $\lambda$-atom $A$ such that $|f(x)|=\|f\|_\infty$ for almost every $x\in A$ and 
    \begin{equation*}
        \lambda\left(\{x\in X\backslash A:|f(x)|>\|f\|_\infty-\epsilon\}\right)=0,
    \end{equation*}
    for some $\epsilon>0$.
\end{theorem}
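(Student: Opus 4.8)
The plan is to transport the question to $C(\mathfrak G)$ via the isometric isomorphism $T$ of Theorem \ref{representation} and apply the singleton norm-attaining-set criterion of Theorem \ref{smooth}. Writing $M:=\|f\|_\infty$ and $N_\epsilon:=\{x\in X:|f(x)|>M-\epsilon\}$, I would first record that, under the correspondence $\mu\leftrightarrow\mathcal U_\mu$ of Theorem \ref{representation2} together with the identity $T(f)(\mu)=\lim_{\mathcal U_\mu}f$ of Theorem \ref{integral representation}, the norm attaining set $M_{T(f)}$ is identified with
\[S:=\{\mathcal U\in\mathfrak F:|\lim_{\mathcal U}f|=M\}.\]
The equivalence $|\lim_{\mathcal U}f|=M\iff N_\epsilon\in\mathcal U$ for every $\epsilon>0$ is checked using that $\lim_{\mathcal U}|f|=|\lim_{\mathcal U}f|$ (limits respect the continuous map $|\cdot|$) and that co-null sets lie in every $\lambda$-ultrafilter by Lemma \ref{significant}. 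Thus by Theorem \ref{smooth}, $f$ is smooth if and only if $S$ is a singleton, and the theorem reduces to characterising when exactly one $\lambda$-ultrafilter contains every $N_\epsilon$.

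The key auxiliary fact I would isolate is that a $\lambda$-atom $A$ is contained in exactly one $\lambda$-ultrafilter, namely $\mathcal U_A:=\{C\in\Sigma:\lambda(A\setminus C)=0\}$; this follows because the atom property forces, for each $C\in\Sigma$, exactly one of $\lambda(A\cap C)>0$ or $\lambda(A\setminus C)>0$, and Lemma \ref{significant} then shows that $C$ or $X\setminus C$ must belong to any ultrafilter through $A$. For sufficiency, given an atom $A$ and $\epsilon_0>0$ as in the statement, I would observe that $A\subseteq N_\epsilon$ modulo a null set while $N_{\epsilon_0}\setminus A$ is null, whence $N_\epsilon=A$ mod null for all $\epsilon\le\epsilon_0$. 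Any $\mathcal U\in S$ must then contain $A$, so $S\subseteq\{\mathcal U_A\}$, and since $A\subseteq N_\epsilon$ gives $\mathcal U_A\in S$, we obtain $S=\{\mathcal U_A\}$ and $f$ is smooth.

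For necessity I would assume $S$ is a singleton and recover the required atom by taking $A:=E:=\{x\in X:|f(x)|=M\}$, establishing the three needed properties by the common device of extending a $\lambda$-filter base to a $\lambda$-ultrafilter and producing a \emph{second} member of $S$ whenever a property fails. First, if $\lambda(E)=0$, Lemma \ref{partition} supplies disjoint $A,B$ whose traces $A\cap N_\epsilon$ and $B\cap N_\epsilon$ have positive measure for every $\epsilon$; extending the bases $\{A\cap N_\epsilon\}$ and $\{B\cap N_\epsilon\}$ yields two distinct ultrafilters in $S$, so $\lambda(E)>0$. Next, if $E$ were not a $\lambda$-atom, splitting $E=E_1\sqcup E_2$ into positive-measure pieces (each contained mod null in every $N_\epsilon$) again produces two distinct members of $S$; hence $E$ is an atom and $|f|=M$ almost everywhere on $A=E$. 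Finally, if $\lambda(N_\epsilon\setminus E)>0$ for every $\epsilon$, the base $\{N_\epsilon\setminus E\}$ extends to an ultrafilter in $S$ that misses $E$ and so differs from $\mathcal U_E\in S$, contradicting uniqueness; thus $\lambda(N_\epsilon\setminus E)=0$ for some $\epsilon$, which is the last required condition.

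The routine parts are the filter-base verifications and the null-set bookkeeping through Lemma \ref{significant}. I expect the main obstacle to be the case $\lambda(E)=0$, where the essential supremum of $|f|$ is attained on no positive-measure set: here one cannot simply split $E$, and must instead invoke the delicate partition of Lemma \ref{partition} to separate the ``approximate peak'' into two pieces, which is exactly what forces non-smoothness whenever no atom carries the essential supremum.
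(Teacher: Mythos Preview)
Your proposal is correct and follows essentially the same approach as the paper: both transport the question to $C(\mathfrak G)$ via Theorem \ref{representation}, apply the singleton criterion of Theorem \ref{smooth}, and then show that exactly one $\lambda$-ultrafilter contains every $N_\epsilon$ precisely under the stated atom condition, handling the three failure modes (null peak set via Lemma \ref{partition}, splittable peak set, and persistent mass outside the atom) by extending filter bases to produce a second member of $S$. Your $\mathcal U_A=\{C:\lambda(A\setminus C)=0\}$ is the mod-null version of the paper's $\mathcal V_A=\{B:B\supset A\}$, a cosmetic difference only.
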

\begin{proof}
By Theorems \ref{smooth}, \ref{representation} and \ref{integral representation}, we have that $f\in L_\infty(X)$ is smooth if and only if there exists a unique $\lambda$-ultrafilter $\mathcal{U}$ such that 
\begin{equation*}
    \lim\limits_\mathcal{U}|f|=\|f\|_\infty.
\end{equation*}
We first prove the sufficiency. Set
\begin{equation*}
    \mathcal{V}_A:=\{B\in \Sigma:B\supset A\}.
\end{equation*}
Clearly, $\mathcal{V}_A$ is a $\lambda$-ultrafilter since no proper measurable subset of $A$ has nonzero measure. Clearly, $\lim\limits_{\mathcal{V}_A}|f|=\|f\|_\infty$. Suppose $\mathcal{U}$ is a $\lambda$-ultrafilter such that $\lim\limits_\mathcal{U}|f|=\|f\|_\infty.$ Let us assume that $\lim\limits_\mathcal{U}f=e^{i\theta}\|f\|_\infty$, for some $\theta\in[0,2\pi)$. Then clearly, 
\begin{equation*}
    A\cup\{x\in X\backslash A: |f(x)|>\|f\|_\infty-\epsilon\}\supset\left\{x\in X:\left|f(x)-e^{i\theta}\|f\|_\infty\right|<\epsilon \right\}\in\mathcal{U}.
\end{equation*}
Since $\{x\in X\backslash A: |f(x)|>\|f\|_\infty-\epsilon\}\notin\mathcal{U}$, by Lemma \ref{significant}, $A\in\mathcal{U}$. Hence clearly $\mathcal{V}_A\subset\mathcal{U}$ and thus $\mathcal{V}_A=\mathcal{U}$ since $\mathcal{V}_A$ is a $\lambda$-ultrafilter.\par
Conversely, suppose there is no $\lambda$-atom $A$ such that $|f(x)|=\|f\|_\infty$ for almost every $x\in A$. Then either
\begin{equation*}
    \lambda\left(\{x\in X: |f(x)|=\|f\|\}\right)=0,
\end{equation*}
or there exist $A$ and $B$ disjoint subsets in $\Sigma$ such that 
\begin{equation*}
    A\sqcup B\subset\{x\in X: |f(x)|=\|f\|_\infty\},
\end{equation*}
and $\lambda(A),\lambda(B)>0$. In the second case we consider 
\begin{align*}
    \mathcal{V}:=\{C\in\Sigma:C\supset A\},~~
    \mathcal{W}:=\{C\in\Sigma:C\supset B\}.
\end{align*}
Clearly $\mathcal{V}$ and $\mathcal{W}$ are contained in two distinct ultrafilters, say $\mathcal{V}'$ and $\mathcal{W}'$ and 
\begin{equation*}
    \lim\limits_{\mathcal{V}'}|f|=\lim\limits_{\mathcal{W}'}|f| =\|f\|_\infty.
\end{equation*}
In the first case, by Lemma \ref{partition}, there exist $A_1, A_2\subset X$ disjoint such that
\begin{equation*}
    \lambda\left(x\in A_i:\|f\|_\infty-|f(x)|<\epsilon\right)>0,
\end{equation*}
for every $\epsilon>0$ and $i=1,2$. Observe that 
\begin{equation*}
    \mathcal{B}_i:=\left\{\{x\in A_i:\|f\|_\infty-|f(x)|<\epsilon\}:\epsilon>0\right\},
\end{equation*}
is a $\lambda$-filter base for $i=1,2$ and hence in contained in a $\lambda$-ultrafilter $\mathcal{U}_i$. Also, clearly, $\mathcal{U}_1\neq\mathcal{U}_2$ and
\begin{equation*}
    \lim\limits_{\mathcal{U}_i}|f|=\|f\|_\infty,
\end{equation*}
for $i=1,2$.\\
Again if there exists a $\lambda$-atom $A$ with $|f(x)|=\|f\|_\infty$ for almost every $x\in A$ but
\begin{equation*}
    \lambda\left(\{x\in X\backslash A:|f(x)|>\|f\|_\infty-\epsilon\}\right)>0~~\textit{for every}~\epsilon>0,
\end{equation*}
then set
\begin{equation*}
    \mathcal{B}:=\left\{\{x\in X\backslash A:|f(x)|<\|f\|_\infty-\epsilon\}:\epsilon>0\right\}.
\end{equation*}
Clearly, $\mathcal{B}$ is a $\lambda$-filter base and is contained in some ultrafilter $\mathcal{U}$. Since $X\backslash A\in\mathcal{U}$, $\mathcal{U}$ and $\mathcal{V}_A$ are two distinct ultrafilters but 
\begin{equation*}
    \lim\limits_{\mathcal{U}}|f|=\lim\limits_{\mathcal{V}_A}|f|=\|f\|_\infty,
\end{equation*}
and hence the necessity.

\end{proof}
\subsection{Pointwise symmetry of Birkhoff-James orthogonality in $L_\infty(X)$}\hfill
\\

In this subsection we characterize the left symmetric and the right symmetric points of $L_\infty(X)$.
We begin with the characterization of the left symmetric points.
\begin{theorem}
    A non-zero $f\in L_\infty(X)$ is left symmetric if and only if $|f(x)|=\|f\|$ for almost every $x$ in some $\lambda$-atom $A$ and $f(x)=0$ for almost every $x\in X\backslash A$.
\end{theorem}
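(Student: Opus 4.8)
The plan is to transport the left-symmetry characterization for $C_0(X)$ (Theorem \ref{left}) across the isometric isomorphism $T$ of Theorem \ref{representation}. Since $T\colon L_\infty(X)\to C^\mathbb{K}(\mathfrak{G},t)$ is an isometric isomorphism and Birkhoff-James orthogonality is defined purely through the norm, $T$ preserves orthogonality in both directions; as $T$ is surjective, $f$ is left symmetric in $L_\infty(X)$ if and only if $T(f)$ is left symmetric in $C^\mathbb{K}(\mathfrak{G},t)=C_0(\mathfrak{G})$ (recall $\mathfrak{G}$ is compact). Applying Theorem \ref{left} and discarding the trivial case $f=0$, this holds exactly when $M_{T(f)}$ is a singleton and $T(f)$ vanishes off $M_{T(f)}$. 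The whole argument then reduces to decoding these two conditions through the dictionary of Theorems \ref{representation2} and \ref{integral representation}, under which $T(f)(\mu)=\lim_{\mathcal{U}_\mu}f$ and, by continuity of the modulus, $|T(f)(\mu)|=\lim_{\mathcal{U}_\mu}|f|$; in particular $\mu\in M_{T(f)}$ iff $\lim_{\mathcal{U}_\mu}|f|=\|f\|_\infty$.

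The first condition, $M_{T(f)}$ singleton, is precisely smoothness of $f$, so by Theorem \ref{smooth infinity 2} it supplies a $\lambda$-atom $A$ with $|f|=\|f\|_\infty$ a.e.\ on $A$ (together with a gap off $A$), and the unique maximizing ultrafilter is the principal one $\mathcal{V}_A=\{B\in\Sigma:B\supseteq A\}$. It then remains to show that the second condition, $T(f)(\mu)=0$ for every $\mu\neq\mu^{\mathcal{V}_A}$, is equivalent to $f=0$ a.e.\ on $X\setminus A$.

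For the necessity direction I would argue by contradiction: if $\lambda(\{x\in X\setminus A:f(x)\neq0\})>0$, then on $X\setminus A$ the essential range of $f$ meets $\{|w|\geq\delta\}$ for some $\delta>0$, so by compactness of the essential range one can choose a nonzero $w_0$ with $\lambda(\{x\in X\setminus A:|f(x)-w_0|<\epsilon\})>0$ for every $\epsilon>0$. These sets form a $\lambda$-filter base lying entirely inside $X\setminus A$, hence sit inside a $\lambda$-ultrafilter $\mathcal{U}$ with $X\setminus A\in\mathcal{U}$, so $A\notin\mathcal{U}$ and $\mathcal{U}\neq\mathcal{V}_A$, yet $\lim_\mathcal{U}f=w_0\neq0$, contradicting the vanishing condition. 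For the sufficiency direction I would verify that the atom condition forces both the smoothness gap of Theorem \ref{smooth infinity 2} (trivially, since $f$ is null off $A$, and $\lim_{\mathcal{V}_A}|f|=\|f\|_\infty$ because $A$ is an atom) and the vanishing: given any $\mathcal{U}\neq\mathcal{V}_A$ we have $A\notin\mathcal{U}$, so by Lemma \ref{significant}(1) there is $B\in\mathcal{U}$ with $\lambda(A\cap B)=0$, and since $f$ vanishes a.e.\ off $A$, the set $\{x:|f(x)|<\epsilon\}$ contains $B$ up to a null set and hence lies in $\mathcal{U}$, giving $\lim_\mathcal{U}f=0$. Theorem \ref{left} then makes $T(f)$ left symmetric, whence $f$ is left symmetric.

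The step I expect to be the main obstacle is the necessity direction: extracting, from the mere positivity of $\lambda(\{x\in X\setminus A:f\neq0\})$, a single target $w_0\neq0$ that is genuinely approached on a positive-measure subset of $X\setminus A$ at every scale $\epsilon$, so that the filter-base construction of Theorem \ref{integral and limit} produces an ultrafilter distinct from $\mathcal{V}_A$ realizing $\lim_\mathcal{U}f=w_0$. This is where the compactness of the essential range and the atomic structure must be used in tandem; the reduction via $T$ and the sufficiency direction are comparatively routine.
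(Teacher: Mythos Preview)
Your proposal is correct and follows essentially the same route as the paper: transport via the isometric isomorphism $T$ to $C(\mathfrak{G})$, invoke Theorem \ref{left}, read off smoothness via Theorem \ref{smooth infinity 2} to obtain the $\lambda$-atom $A$, and then decode the vanishing condition $T(f)(\mu)=0$ for $\mu\neq\mu^{\mathcal{V}_A}$ through the ultrafilter dictionary.

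The only noteworthy difference is in the necessity step where one passes from ``$\lim_{\mathcal U}f=0$ for all $\mathcal U\neq\mathcal V_A$'' to ``$f=0$ a.e.\ on $X\setminus A$.'' The paper argues directly: for each nonzero $z$ there is $\epsilon_z>0$ with $\lambda(\{x\in X\setminus A:|f(x)-z|<\epsilon_z\})=0$, then covers $\mathbb{K}\setminus\{0\}$ by these balls, extracts a countable subcover, and sums the null sets. You argue the contrapositive: from $\lambda(\{x\in X\setminus A:f\neq0\})>0$ first use countable subadditivity to get $\lambda(\{x\in X\setminus A:|f|\geq\delta\})>0$ for some $\delta>0$, then use compactness of the annulus $\{\delta\leq|w|\leq\|f\|_\infty\}$ to locate a nonzero $w_0$ in the essential range of $f|_{X\setminus A}$, and build a $\lambda$-ultrafilter realizing $\lim_{\mathcal U}f=w_0$. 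Both are short and valid; your compactness argument is perhaps slightly cleaner since it avoids the second-countability/countable-subcover step, but the two are really contrapositives of the same idea.
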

\begin{proof}
By Theorems \ref{smooth} and \ref{left}, if $f\in L_\infty(X)$ is left symmetric, $f$ is smooth and hence by Theorem \ref{smooth infinity 2}, there exists a $\lambda$-atom $A$, such that $|f(x)|=\|f\|_\infty$ for almost every $x\in A$. Further, by Theorems \ref{representation}, \ref{representation2} and \ref{integral representation}, $\lim\limits_\mathcal{U}f=0$ for every $\lambda$-ultrafilter $\mathcal{U}$ not containing $A$. Hence, for every $z\neq 0$, there exists $\epsilon_z>0$ such that 
\begin{equation*}
    \lambda\left(\{x\in X\backslash A:|f(x)-z|<\epsilon_z\}\right)=0.
\end{equation*}
Hence 
\begin{equation*}
    \left\{\{w\in\mathbb{K}:|w-z|<\epsilon_z\}:z\neq 0\right\},
\end{equation*}
is an open cover of $\mathbb{K}\backslash \{0\}$. Choose and fix a countable sub-cover given by:
\begin{equation*}
    \left\{\left\{w\in\mathbb{K}:\left|w-z_n\right|<\epsilon_{z_n}\right\}:z_n\neq 0,~n\in\mathbb{N}\right\}.
\end{equation*}
Hence 
\begin{equation*}
    \lambda\left(\{x\in X\backslash A:f(x)\neq 0\}\right)\leq\sum\limits_{n=1}^\infty\lambda\left(\{x\in X: \left|f(x)-z_n\right|\}\right)=0,
\end{equation*}
proving the necessity. The sufficiency follows easily from Theorems \ref{left}, \ref{representation}, \ref{integral representation} and \ref{integral and limit}.
\end{proof}
\begin{theorem}
    $f\in L_\infty(X)$ is right symmetric if and only if $|f(x)|=\|f\|_\infty$ for almost every $x\in X$.
\end{theorem}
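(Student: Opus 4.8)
The plan is to transport the problem to $C(\mathfrak{G})$ and invoke the right-symmetry characterization already proved there. Since the map $T$ of Theorem \ref{representation} is an isometric isomorphism, it preserves the norm and the linear structure, and hence Birkhoff-James orthogonality; consequently $f$ is right-symmetric in $L_\infty(X)$ if and only if $T(f)$ is right-symmetric in $C(\mathfrak{G})=C_0(\mathfrak{G})$ (recall that $\mathfrak{G}$ is compact). By Theorem \ref{right}, the latter holds precisely when $M_{T(f)}=\mathfrak{G}$, i.e. $|T(f)(\mu)|=\|T(f)\|=\|f\|_\infty$ for every $\mu\in\mathfrak{G}$. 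Using Theorem \ref{integral representation} together with the fact that limits under a $\lambda$-filter commute with the continuous map $|\cdot|$, one has $|T(f)(\mu)|=\left|\int_X f\,d\mu\right|=\left|\lim_{\mathcal{U}_\mu}f\right|=\lim_{\mathcal{U}_\mu}|f|$. Since $\mu\mapsto\mathcal{U}_\mu$ is a bijection between $\mathfrak{G}$ and $\mathfrak{F}$ by Theorem \ref{representation2}, the right-symmetry of $f$ is therefore equivalent to the single condition
\begin{equation*}
    \lim_{\mathcal{U}}|f|=\|f\|_\infty\quad\text{for every }\lambda\text{-ultrafilter }\mathcal{U}.
\end{equation*}
It then remains to show that this condition is equivalent to $|f(x)|=\|f\|_\infty$ for almost every $x\in X$.

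For the sufficiency, suppose $|f|=\|f\|_\infty$ almost everywhere. Then $|f|$ and the constant function $\|f\|_\infty$ coincide as elements of $L_\infty(X)$, so by the well-definedness part of Theorem \ref{integral representation}(1) the limit of $|f|$ under any $\lambda$-ultrafilter equals the limit of the constant, namely $\|f\|_\infty$; this is exactly the displayed condition.

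For the necessity, I would argue by contraposition. If $|f(x)|=\|f\|_\infty$ fails on a set of positive measure, then since $\{|f|<\|f\|_\infty\}=\bigcup_n\{|f|<\|f\|_\infty-\tfrac1n\}$, there is some $n$ for which $S:=\{x\in X:|f(x)|<\|f\|_\infty-\tfrac1n\}$ has positive measure. I then produce a value $z_0$ with $|z_0|\le\|f\|_\infty-\tfrac1n$ lying in the essential range of $f$ restricted to $S$: were no such $z_0$ to exist, every point of the compact disk $\{|z|\le\|f\|_\infty-\tfrac1n\}$ (which contains the essential values of $f$ on $S$) would admit a neighborhood meeting $S$ in a null set, and a finite subcover would force $\lambda(S)=0$, a contradiction. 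This is the same compactness argument used in Theorem \ref{integral representation}(1). For this $z_0$ one has $\lambda(\{x\in X:|f(x)-z_0|<\epsilon\})\geq\lambda(\{x\in S:|f(x)-z_0|<\epsilon\})>0$ for every $\epsilon>0$, so Theorem \ref{integral and limit}(1) furnishes a $\lambda$-ultrafilter $\mathcal{U}$ with $\lim_{\mathcal{U}}f=z_0$, whence $\lim_{\mathcal{U}}|f|=|z_0|\le\|f\|_\infty-\tfrac1n<\|f\|_\infty$, violating the displayed condition.

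The conceptual steps---transfer via $T$, application of Theorem \ref{right}, and the translation of the norm-attaining condition into a statement about limits under ultrafilters---are routine once the machinery of the previous subsections is in place, so the sufficiency direction is essentially immediate. I expect the only genuine work to lie in the necessity direction, and specifically in the extraction of an essential-range value $z_0$ strictly inside the disk of radius $\|f\|_\infty$: this compactness/finite-subcover argument is the crux, since it is what converts ``positive measure away from the maximal modulus'' into an actual $\lambda$-ultrafilter witnessing a strictly smaller limit.
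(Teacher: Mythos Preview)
Your proof is correct and follows essentially the same route as the paper: transport via $T$, apply Theorem \ref{right} on $C(\mathfrak{G})$, and reduce the necessity to a covering/compactness argument. The only cosmetic difference is that the paper phrases the intermediate condition as ``no $z$ with $|z|<\|f\|_\infty$ lies in the essential range of $f$'' and then takes a single countable subcover of the open disk $\{|z|<\|f\|_\infty\}$, whereas you first pass to a slice $S=\{|f|<\|f\|_\infty-\tfrac1n\}$ of positive measure and then use a finite subcover of the closed disk; both execute the same idea.
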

\begin{proof}
By Theorems \ref{right}, \ref{representation}, \ref{integral representation} and \ref{integral and limit}, $f\in L_\infty(X)$ is right symmetric if and only if for every $z\in\mathbb{K}$ with $|z|<\|f\|_\infty$, there exists $\epsilon_z>0$ such that 
\begin{equation*}
    \lambda\left(\{x\in X:|f(x)-z|<\epsilon_z\}\right)=0.
\end{equation*}
Hence the sufficiency is easy to verify. For the necessity, observe that,
\begin{equation*}
    \left\{\{w\in\mathbb{K}:|w-z|<\epsilon_z\}:|z|<\|f\|_\infty\right\},
\end{equation*}
is an open cover of $\{z\in\mathbb{K}:|z|<\|f\|_\infty\}$. Choose and fix a countable sub-cover of the aforesaid cover given by:
\begin{equation*}
    \left\{\left\{w\in\mathbb{K}:\left|w-z_n\right|<\epsilon_{z_n}\right\}: \left|z_n\right|<\|f\|_\infty,~n\in\mathbb{N}\right\}.
\end{equation*}
Hence we obtain: 
\begin{align*}
        \lambda\left(\{x\in X:|f(x)|\neq\|f\|_\infty\right)&=\lambda\left(\{x\in X:|f(x)|<\|f\|_\infty\right\})\\
        &\leq\sum\limits_{n=1}^\infty \lambda\left(\left\{x\in X:\left|f(x)-z_n\right|<\epsilon_{z_n}\right\}\right)=0.
\end{align*}
\end{proof}

\section{Birkhoff-James orthogonality and its pointwise symmetry in $L_1$ spaces}
In this section, we first characterize Birkhoff-James orthogonality in $L_1(X)$ and then characterize smoothness and pointwise symmetry. As before, we assume the measure space to be $(X,\Sigma,\lambda)$. Our approach would be to characterize $J(f)$ for any non-zero $f\in L_1(X)$ and therefrom use the James characterization to characterize Birkhoff-James orthogonality. The characterizations of smoothness and pointwise symmetry would follow therefrom.\par
Since the dual of $L_1(X)$ is isometrically isomorphic to $L_\infty(X)$, we are going to assume that $L_\infty(X)$ is indeed the dual of $L_1(X)$ and any element $h\in L_\infty(X)$ acts on $L_1(X)$ as:
\begin{align*}
    f\mapsto \int\limits_Xh(x)f(x)d\lambda(x),~f\in L_1(X).
\end{align*}
\begin{lemma}\label{supp}
Suppose $f\in L_1(X)\setminus\{0\}$. Then for any $h\in L_\infty(X)=L_1(X)^*$, $h\in J(f)$ if and only if $h(x)=\overline{\sgn(f(x))}$ for almost every $x\in X$ such that $f(x)\neq0$, and $|h(x)|\leq1$ for almost every $x\in X$ such that $f(x)=0$.
\end{lemma}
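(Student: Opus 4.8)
The plan is to compute directly with the pairing $h(f)=\int_X h\,f\,d\lambda$ and to read off everything from the two scalar quantities that define membership in $J(f)$, namely $\|h\|_\infty=1$ and $|h(f)|=\|f\|_1$. Note first that the condition $|h(x)|\le 1$ for a.e.\ $x$ with $f(x)=0$ will never be the difficulty: it is an automatic consequence of $\|h\|_\infty=1$, which forces $|h|\le 1$ a.e.\ on all of $X$. So the real content of the lemma is the identity $h=\overline{\sgn(f)}$ on the set $\{f\neq 0\}$, and the proof splits into the two implications.

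For the sufficiency I would simply substitute. Assuming the two pointwise conditions, on $\{f\neq 0\}$ one has $h(x)f(x)=\overline{\sgn(f(x))}\,f(x)=|f(x)|$, and on $\{f=0\}$ the product vanishes, so integrating gives $h(f)=\int_X|f|\,d\lambda=\|f\|_1$ and hence $|h(f)|=\|f\|_1$. For the norm, $|h|=1$ a.e.\ on $\{f\neq 0\}$, a set of positive measure since $f\neq 0$, so $\|h\|_\infty\ge 1$; and the two conditions together give $|h|\le 1$ a.e.\ on $X$, so $\|h\|_\infty=1$. Thus $h\in J(f)$.

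For the necessity I would start from $h\in J(f)$ and exploit the chain
\[\|f\|_1=|h(f)|=\left|\int_X h\,f\,d\lambda\right|\le\int_X|h|\,|f|\,d\lambda\le\int_X|f|\,d\lambda=\|f\|_1,\]
forcing both inequalities to be equalities. Equality on the right gives $\int_X(1-|h|)\,|f|\,d\lambda=0$ with a nonnegative integrand, hence $|h|=1$ a.e.\ on $\{f\neq 0\}$. Equality on the left is the equality case of the triangle inequality for complex integrals, which pins the phase: $h(x)f(x)=\sgn(h(f))\,|h(x)f(x)|$ for a.e.\ $x$. Combining this with $|h|=1$ on $\{f\neq 0\}$ yields $h(x)f(x)=\sgn(h(f))\,|f(x)|$ there, and dividing by $f(x)$ (using $|f(x)|/f(x)=\overline{\sgn(f(x))}$) gives $h(x)=\sgn(h(f))\,\overline{\sgn(f(x))}$ a.e.\ on $\{f\neq 0\}$. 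Since $J(f)$ is invariant under multiplication by unimodular scalars, one normalises the representative so that $h(f)=\|f\|_1\ge 0$, whence $\sgn(h(f))=1$ and $h=\overline{\sgn(f)}$ a.e.\ on $\{f\neq 0\}$, as claimed.

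The step I expect to be the main obstacle is this phase analysis in the necessity direction. Extracting $|h|=1$ on $\{f\neq 0\}$ is the routine ``nonnegative integrand with zero integral'' argument, but controlling the argument of $h\,f$ genuinely needs the equality case of the integral triangle inequality (equivalently, that $\mathrm{Re}\bigl(\overline{\sgn(h(f))}\,h\,f\bigr)=|h\,f|$ a.e.), together with the observation that the modulus condition in the definition of $J(f)$ determines $h$ on $\{f\neq 0\}$ only up to a unimodular constant; this is why the characterization is naturally read off the canonical representative with $h(f)=\|f\|_1$. It is worth noting that no finiteness or $\sigma$-finiteness hypothesis on $(X,\Sigma,\lambda)$ enters anywhere, since every integral above is dominated by $|f|\in L_1(X)$.
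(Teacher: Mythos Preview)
Your proof is correct and follows essentially the same approach as the paper's: direct computation for sufficiency, and the equality case of the chain $\|f\|_1=|h(f)|\le\int_X|h|\,|f|\,d\lambda\le\|f\|_1$ for necessity. The paper handles the phase issue you flag by simply writing $\|f\|_1=\int_X hf\,d\lambda$ from the outset (implicitly taking $h(f)$ real and positive), whereas you make the normalization to the canonical representative explicit; your treatment is the more careful of the two.
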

\begin{proof}
The sufficiency follows from direct computation. For the necessity, note that
\begin{align*}
    \|f\|_1=\int\limits_Xh(x)f(x)d\lambda(x)\leq\int\limits_X\|h\|_\infty|f(x)|d\lambda(x)=\|f\|_1,
\end{align*}
whenever $h\in J(f)$. Hence from the condition of equality in the above inequality, we obtain $h(x)=\|h\|_\infty\overline{\sgn(f(x))}=\overline{\sgn(f(x))}$ for almost every $x\in X$, $f(x)\neq0$.
\end{proof}
From this lemma, we can now characterize Birkhoff-James orthogonality in $L_1(X)$.
\begin{theorem}\label{orth1}
Suppose $f,g\in L_1(X)$. Then $f\perp_Bg$ if and only if
\begin{align}\label{ortho1}
    \left|\int\limits_X\overline{\sgn(f(x))}g(x)d\lambda(x)\right|\leq\int\limits_{f(x)=0}|g(x)|d\lambda(x).
\end{align}
\end{theorem}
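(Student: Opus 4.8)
The plan is to invoke the James characterization: $f\perp_B g$ if and only if there exists $h\in J(f)$ with $\int_X h(x)g(x)\,d\lambda(x)=0$. By Lemma \ref{supp}, every $h\in J(f)$ is completely pinned down on the set $\{f\neq 0\}$, where it must equal $\overline{\sgn(f(x))}$, and is free on the set $\{f=0\}$ subject only to $|h(x)|\leq 1$ a.e. So the entire problem reduces to asking: can we choose the free values of $h$ on $\{f=0\}$ so that the total integral vanishes? Splitting the integral $\int_X h g\,d\lambda$ along the partition $\{f\neq 0\}\sqcup\{f=0\}$, the contribution from $\{f\neq 0\}$ is the fixed number $\int_{f\neq 0}\overline{\sgn(f(x))}g(x)\,d\lambda(x)=\int_X \overline{\sgn(f(x))}g(x)\,d\lambda(x)$ (since $\sgn(0)=0$ kills the rest), while the contribution from $\{f=0\}$ is $\int_{f=0} h(x)g(x)\,d\lambda(x)$, which we may steer.

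First I would prove the necessity. If $f\perp_B g$, pick $h\in J(f)$ annihilating $g$. Then the fixed part equals the negative of the free part, so
\begin{align*}
    \left|\int_X \overline{\sgn(f(x))}g(x)\,d\lambda(x)\right|
    =\left|\int_{f=0} h(x)g(x)\,d\lambda(x)\right|
    \leq \int_{f=0}|h(x)||g(x)|\,d\lambda(x)
    \leq \int_{f=0}|g(x)|\,d\lambda(x),
\end{align*}
using $|h|\leq 1$ a.e. on $\{f=0\}$. This gives inequality \eqref{ortho1} immediately.

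For the sufficiency, assume \eqref{ortho1} holds. I want to construct the free part of $h$ on $\{f=0\}$ so that it cancels the fixed part. Write $c:=\int_X \overline{\sgn(f(x))}g(x)\,d\lambda(x)$ and let $S:=\int_{f=0}|g(x)|\,d\lambda(x)$, so $|c|\leq S$ by hypothesis. The natural choice is $h(x):=-t\,\overline{\sgn(g(x))}$ for $x\in\{f=0\}$ with a suitable scalar $t\in[0,1]$: on $\{f=0, g\neq 0\}$ this gives $h(x)g(x)=-t|g(x)|$, so the free contribution becomes $-tS$. To cancel $c$ I need $c=tS\,\sgn(c)$ in an appropriate sense; when $c=0$ set $t=0$, and when $c\neq 0$ the cleaner device is to take $h(x):=-\frac{c}{S}\,\overline{\sgn(g(x))}$ on $\{f=0\}$ (valid since $S>0$ whenever $c\neq 0$, as $|c|\leq S$), which has modulus $\frac{|c|}{S}\leq 1$ as required and yields free contribution $-\frac{c}{S}\int_{f=0}|g|\,d\lambda=-c$. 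Setting $h=\overline{\sgn(f)}$ on $\{f\neq 0\}$ and the above on $\{f=0\}$ produces $h\in J(f)$ by Lemma \ref{supp} with $\int_X hg\,d\lambda=c-c=0$, hence $f\perp_B g$.

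The only delicate point is the degenerate case $S=0$ while $c\neq 0$, which \eqref{ortho1} forbids ($|c|\leq S=0$ forces $c=0$), so the construction never divides by zero; and the sub-case where $g$ vanishes on part of $\{f=0\}$ is harmless since $\overline{\sgn(g)}=0$ there contributes nothing. The main thing to be careful about is verifying the equality condition in the $L_1$–$L_\infty$ Hölder bound invoked in Lemma \ref{supp} genuinely forces $h=\overline{\sgn(f)}$ a.e. on $\{f\neq 0\}$ rather than merely $|h|=1$ there — but that is already handled by the lemma, so here it is pure bookkeeping of the two pieces of the integral.
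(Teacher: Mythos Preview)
Your proof is correct and follows essentially the same route as the paper: invoke James' characterization together with Lemma \ref{supp}, split $\int_X hg\,d\lambda$ along $\{f\neq 0\}\sqcup\{f=0\}$, bound for necessity, and for sufficiency construct $h$ as $\overline{\sgn(f)}$ on $\{f\neq 0\}$ and a scalar multiple of $\overline{\sgn(g)}$ on $\{f=0\}$. Your choice of scaling constant $-c/S$ (with $c$ the integral itself) is in fact cleaner than the paper's $-|c|/S$, which as written only gives $\int_X hg=0$ when $c\geq 0$ is real; your version works uniformly over $\mathbb{K}$.
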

\begin{proof}
We first prove the necessity. Since $f\perp_Bg$, there exists $h\in L_\infty(X)$, such that $h\in J(f)$ and $\int\limits_Xg(x)h(x)d\lambda(x)=0$. By Lemma \ref{supp}, we now conclude
\begin{align*}
    \left|\int\limits_X\overline{\sgn(f(x))}g(x)d\lambda(x)\right|&=\left|\int\limits_{f(x)\neq0}\overline{\sgn(f(x))}g(x)d\lambda(x)\right|\\&=\left|\int\limits_{f(x)=0}g(x)h(x)d\lambda(x)\right|\leq\int\limits_{f(x)=0}|g(x)|d\lambda.
\end{align*}
Again, if \eqref{ortho1} holds, set:
\begin{align*}
    c=\frac{-\left|\int\limits_X\overline{\sgn(f(x))}g(x)d\lambda(x)\right|}{\int\limits_{f(x)=0}|g(x)|d\lambda(x)}.
\end{align*}
Consider $h:X\to\mathbb{K}$ given by
\begin{align*}
    h(x):=
    \begin{cases}
        \overline{\sgn(f(x))},~~&f(x)\neq0,\\
        c\,\overline{\sgn(g(x))},~~&f(x)=0.
    \end{cases}
\end{align*}
Clearly, $h\in L_\infty(X)$ and $h\in J(f)$ by Lemma \ref{supp}. But clearly $\int\limits_Xg(x)h(x)d\lambda(x)=0$, establishing the sufficiency.
\end{proof}
We now characterize the smooth points of $L_1(X)$.
\begin{theorem}\label{s}
$f\in L_1(X)$ is a smooth point if and only if $f\neq0$ almost everywhere on $X$.
\end{theorem}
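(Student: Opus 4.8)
The plan is to characterize smoothness via the uniqueness of the support functional, using Lemma \ref{supp} which already describes $J(f)$ completely. The key observation is that Lemma \ref{supp} pins down any $h \in J(f)$ to equal $\overline{\sgn(f(x))}$ wherever $f(x) \neq 0$, leaving freedom only on the zero set $Z := \{x \in X : f(x) = 0\}$, where the only constraint is $|h(x)| \leq 1$ almost everywhere. Thus $J(f)$ is a singleton precisely when this residual freedom on $Z$ collapses, i.e.\ when any two admissible choices of $h$ on $Z$ agree almost everywhere.

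First I would prove the sufficiency: suppose $f \neq 0$ almost everywhere, so $\lambda(Z) = 0$. Then by Lemma \ref{supp} any $h \in J(f)$ satisfies $h(x) = \overline{\sgn(f(x))}$ for almost every $x \in X$, since the complementary set $Z$ has measure zero and imposes no genuine constraint. Hence all elements of $J(f)$ coincide as elements of $L_\infty(X)$, so $J(f)$ is a singleton and $f$ is smooth.

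For the necessity, I would argue contrapositively: assume $\lambda(Z) > 0$ and produce two distinct support functionals. The natural candidates are $h_1$ and $h_2$ which both equal $\overline{\sgn(f(x))}$ off $Z$, but take different values on $Z$ subject to $|h_i| \leq 1$. For instance, set $h_1 \equiv 0$ on $Z$ and $h_2 \equiv 1$ on $Z$ (or more carefully, choose a measurable subset $Z_0 \subset Z$ with $0 < \lambda(Z_0)$, which exists up to the usual caveat, and let the two functions differ on $Z_0$). Both lie in $J(f)$ by Lemma \ref{supp}, yet they disagree on a set of positive measure and are therefore distinct in $L_\infty(X)$, so $J(f)$ is not a singleton and $f$ is not smooth.

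The main subtlety I anticipate is the measure-theoretic one in the necessity: I must ensure that when $\lambda(Z) > 0$ there genuinely exist two elements of $J(f)$ that differ on a set of \emph{positive} measure, which is automatic if I simply take the constant functions $0$ and $1$ on $Z$ provided $\lambda(Z) > 0$. This requires no atomlessness of $\lambda$, since I only need one set ($Z$ itself) of positive measure on which to vary $h$, so the argument works for an arbitrary measure space without any $\sigma$-finiteness or non-atomicity hypothesis. The rest is a routine application of Lemma \ref{supp} together with the fact that equality in $L_\infty(X)$ means equality almost everywhere.
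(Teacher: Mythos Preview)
Your proposal is correct and, for the necessity, essentially identical to the paper's: both construct two support functionals that agree with $\overline{\sgn(f)}$ on $\{f\neq 0\}$ and take the constant values $0$ and $1$ respectively on the zero set $Z$, then invoke Lemma~\ref{supp}.

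For the sufficiency there is a minor difference worth noting. The paper does not argue directly that $J(f)$ is a singleton; instead it passes through Theorem~\ref{orth1} to show that when $\lambda(Z)=0$ the orthogonality relation $f\perp_B g$ reduces to the single linear condition $\int_X\overline{\sgn(f)}\,g\,d\lambda=0$, and then verifies right-additivity of $\perp_B$ at $f$, invoking James' criterion for smoothness. Your route---reading off from Lemma~\ref{supp} that every $h\in J(f)$ must equal $\overline{\sgn(f)}$ almost everywhere once $\lambda(Z)=0$---is more direct, since smoothness is \emph{defined} in the paper as $J(f)$ being a singleton, so you avoid the detour through the orthogonality characterization and James' equivalence. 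The paper's route, on the other hand, makes explicit the form of the orthogonality relation at a smooth point, which it reuses immediately in the subsequent left-symmetry theorem. Both arguments are short and neither requires any hypothesis on the measure beyond what the paper already assumes.
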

\begin{proof}
If $f\neq0$ almost everywhere on $X$, $\int\limits_{f(x)=0}|g(x)|d\lambda(x)=0$ for every $g\in L_1(X)$ giving $f\perp_Bg$ if and only if
\begin{align*}
    \int\limits_X\overline{\sgn(f(x))}g(x)d\lambda(x)=0.
\end{align*}
Hence $f\perp_Bg$ and $f\perp_Bh$ for $g,h\in L_1(X)$ forces $f\perp_B(g+h)$, proving the sufficiency. To prove the necessity, assume $\lambda(\{x\in X:f(x)\neq0\})>0$. Consider $h_0,h_1:X\to\mathbb{K}$ given by
\begin{align*}
    h_i(x):=
    \begin{cases}
        \overline{\sgn(f(x))},~~&f(x)\neq0,\\
        i,~~&f(x)=0,
    \end{cases}
\end{align*}
for $i=0,1$. Then by Lemma \ref{supp}, $h_0$ and $h_1$ are two distinct support functionals of $f$ and hence $f$ cannot be a smooth point of $L_1(X)$. 
\end{proof}
We now characterize the pointwise symmetry of Birkhoff-James orthogonality in $L_1(X)$. We first address the left-symmetric case.
\begin{theorem}
$f\in L_1(X)$ is a left-symmetric point if and only if exactly one of the following conditions holds:
\begin{enumerate}
    \item $f\equiv0$.
    \item $f\not\equiv0$ and $\Sigma=\{\emptyset, X\}$.
    \item There exist disjoint $\Sigma$-atoms $A$ and $B$ such that $A\sqcup B=X$ and $\lambda(A)|f(x)|=\lambda(B)|f(y)|$ for almost every $x\in A$ and $y\in B$.
\end{enumerate}
\end{theorem}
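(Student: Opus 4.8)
The characterization of left-symmetric points should follow the same template as the $L_\infty$ case: use the orthogonality criterion of Theorem \ref{orth1} together with the structure of $\Sigma$-atoms. The plan is to analyze the condition $f \perp_B g \Rightarrow g \perp_B f$ by feeding carefully chosen test functions $g$ into \eqref{ortho1} and seeing what constraints left-symmetry forces on $f$ and on the measure space itself.

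First I would dispose of the easy cases. Case (1) is trivial since $0 \perp_B g$ always and $g \perp_B 0$ always. For case (2), if $\Sigma = \{\emptyset, X\}$ then every measurable function is constant a.e., so $L_1(X)$ is one-dimensional (or zero-dimensional), and in a one-dimensional space every point is trivially both left- and right-symmetric; this needs only a line. The substance is in case (3) and in showing these three cases are exhaustive and mutually exclusive for a left-symmetric $f \not\equiv 0$.

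For the necessity in case (3), assume $f \not\equiv 0$ is left-symmetric and $\Sigma \neq \{\emptyset, X\}$. The strategy is: first show that $f$ must be non-zero almost everywhere. If $\lambda(\{f = 0\}) > 0$, I would pick $g$ supported on $\{f=0\}$ with $\int_{f=0}|g|\,d\lambda$ small but nonzero; then the right side of \eqref{ortho1} is positive while the left side is $0$, so $f \perp_B g$, yet testing $g \perp_B f$ via Theorem \ref{orth1} applied with roles reversed should fail because $\{g = 0\} \supset \{f \neq 0\}$ carries all of $\int_{g=0}|f|$. Next, granting $f \neq 0$ a.e., left-symmetry combined with Theorem \ref{s} would make $f$ smooth; but a smooth left-symmetric point forces rigidity. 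I would then show $X$ must decompose into exactly two atoms: if there were three pairwise disjoint sets of positive measure, or a non-atomic piece, I could construct $g$ with $\int_X \overline{\sgn(f)}\,g\,d\lambda = 0$ (so $f \perp_B g$ since both sides of \eqref{ortho1} vanish) while arranging $g \not\perp_B f$ by making $\{g = 0\}$ too small to absorb $|\int_{g=0} \overline{\sgn(g)} f|$. The atomic two-piece structure, with the balance condition $\lambda(A)|f(x)| = \lambda(B)|f(y)|$, is exactly what makes the symmetric roles of $A$ and $B$ compatible.

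The main obstacle I anticipate is the sufficiency direction for case (3): verifying that the balance condition $\lambda(A)|f| = \lambda(B)|f|$ on the two atoms genuinely yields left-symmetry. With $f$ constant in modulus on each atom (since atoms force a.e.-constancy), any $g$ is determined by its two values $g|_A = \alpha$, $g|_B = \beta$, and both orthogonality relations reduce to scalar inequalities relating $\alpha, \beta$ and the atom masses. Concretely, $f \perp_B g$ becomes $|\lambda(A)\overline{\sgn(f|_A)}\alpha + \lambda(B)\overline{\sgn(f|_B)}\beta| \le 0$ (as $\{f=0\}$ is null), forcing that combination to vanish; the balance condition is precisely what makes the reversed relation $g \perp_B f$ reduce to the same vanishing condition. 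I would carry out this scalar computation explicitly, using that on an atom a function equals its essential value a.e., and check that the two inequalities coincide under the mass-balance hypothesis — this bookkeeping, rather than any deep idea, is where the care is needed.
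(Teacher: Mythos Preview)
Your overall architecture matches the paper's (dispose of the trivial cases, then split on whether $\lambda(\{f=0\})>0$, and when $f\neq 0$ a.e.\ exploit smoothness), but your Case I test function does not do what you claim. If $g$ is supported on $\{f=0\}$, then $\overline{\sgn(g)}\,f$ vanishes identically (where $g\neq 0$ we have $f=0$; where $g=0$ we have $\sgn(g)=0$), so the left side of \eqref{ortho1} with the roles reversed is $0$, while the right side $\int_{g=0}|f|\,d\lambda$ equals $\|f\|_1>0$. Thus $g\perp_B f$ \emph{does} hold, and no contradiction is obtained. Your remark that ``$\{g=0\}\supset\{f\neq 0\}$ carries all of $\int_{g=0}|f|$'' is precisely why the inequality is satisfied, not violated.

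The repair, which is what the paper does, is to let $g$ coincide with $f$ on $\{f\neq 0\}$ and take a large constant value on a finite-measure piece $A\subset\{f=0\}$, calibrated so that $\int_{f=0}|g|=\|f\|_1$; then $f\perp_B g$ holds with equality in \eqref{ortho1}, while $g\not\perp_B f$ because now $\{g=0\}\subset\{f=0\}$ makes $\int_{g=0}|f|=0$ but $\int_X\overline{\sgn(g)}f=\|f\|_1>0$. Your Case II idea is essentially the paper's (the paper phrases it as: if (2) and (3) fail and $f\neq 0$ a.e., there is $A\in\Sigma$ with $0<\int_A|f|<\int_{X\setminus A}|f|$, then $g=\beta f\cdot \mathbf 1_A-\alpha f\cdot\mathbf 1_{X\setminus A}$ works); your sufficiency sketch for (3) is correct, and the balance $\lambda(A)|a|=\lambda(B)|b|$ is indeed exactly what makes the two scalar orthogonality conditions coincide.
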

\begin{proof}
The sufficiency can be obtained from Theorem \ref{orth1} by direct computation. For, the necessity, let $f\in L_1(X)$, $f\not\equiv0$. We consider two cases:\\
\textbf{Case I:} $\lambda(\{x\in X:f(x)=0\})>0$.\\
Set $A\subseteq\{x\in X:f(x)=0\}$ such that $\infty>\lambda(A)>0$. Consider $g:X\to\mathbb{K}$ given by
\begin{align*}
    g(x):=
    \begin{cases}
        f(x),~~&f(x)\neq0,\\
        \frac{\|f\|_1}{\lambda(A)},~~&x\in A,\\
        0,~~&\text{otherwise}.
    \end{cases}
\end{align*}
Clearly, $g\in L_1(X)$ and by Theorem \ref{orth1}, $f\perp_Bg$ but $g\not\perp_Bf$.\\
\textbf{Case II:} $\lambda(\{x\in X:f(x)=0\})=0$.\\
Since $f\in L_1(X)$ is a smooth point, from the proof of Theorem \ref{s}, for any $g\in L_1(X)$, $f\perp_Bg$ if and only if 
\begin{align*}
    \int\limits_X\overline{\sgn(f(x))}g(x)d\lambda(x)=0.
\end{align*}
Suppose, there do not exist disjoint $\Sigma$-atoms $A$ and $B$ such that $A\sqcup B=X$ and $\lambda(A)|f(x)|=\lambda(B)|f(y)|$ for almost every $x\in X$ and $y\in Y$. Then there exists $A\in \Sigma$ such that
\begin{align*}
    0<\int\limits_A|f(x)|d\lambda(x)<\int\limits_{X\setminus A}|f(x)|d\lambda(x).
\end{align*}
Set $\alpha=\int\limits_A|f(x)|d\lambda(x)$ and $\beta=\int\limits_{X\setminus A}|f(x)|d\lambda(x)$. Now, $g:X\to\mathbb{K}$ given by
\begin{align*}
    g(x):=
    \begin{cases}
        \beta f(x),~~&x\in A,\\
        -\alpha f(x),~~&x\notin A,
    \end{cases}
\end{align*}
is a smooth point of $L_1(X)$ by Theorem \ref{s}. Hence for any $h\in L_1(X)$, $g\perp_Bh$ if and only if
\begin{align*}
    \int\limits_X\overline{\sgn(g(x))}h(x)d\lambda(x)&=0\\
    \Leftrightarrow \int\limits_A\overline{\sgn(f(x))}h(x)d\lambda(x)-\int\limits_{X\setminus A}&\overline{\sgn(f(x))}h(x)=0.
\end{align*}
Hence by our choice of $A\in \Sigma$, $f\perp_Bg$ but $g\not\perp_Bf$.

\end{proof}
We conclude this section with the characterization of the right-symmetric points of $L_1(X)$.
\begin{theorem}
A non-zero function $f\in L_1(X)$ is right-symmetric if and only if $\{x\in X:f(x)\neq0\}$ is a $\Sigma$-atom.
\end{theorem}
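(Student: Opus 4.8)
The plan is to prove both implications directly from the orthogonality criterion of Theorem \ref{orth1}, exploiting the standard fact that a measurable function is almost everywhere constant on a $\Sigma$-atom. Throughout, write $S:=\{x\in X:f(x)\neq0\}$.

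For sufficiency, I would assume $S$ is a $\Sigma$-atom, so that $f$ is a.e. equal to a single constant $c\neq0$ on $S$ and vanishes off $S$, while any $g\in L_1(X)$ is a.e. equal to some constant $d$ on $S$. I would feed $g\perp_B f$ into Theorem \ref{orth1} with the roles of the two functions swapped and evaluate both sides: the left side $|\int_X\overline{\sgn(g)}f\,d\lambda|$ equals $|c|\,\lambda(S)$ when $d\neq0$ and $0$ when $d=0$, whereas the right side $\int_{g=0}|f|\,d\lambda$ equals $0$ when $d\neq0$ and $|c|\,\lambda(S)$ when $d=0$. This forces $g\perp_B f$ precisely when $d=0$, i.e.\ when $g$ vanishes a.e.\ on $S$. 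In that case $\int_X\overline{\sgn(f)}g\,d\lambda=0$ (on $X\setminus S$ one has $\sgn(f)=0$, and on $S$ one has $g=0$ a.e.), so the criterion of Theorem \ref{orth1} for $f\perp_B g$ is trivially met; hence $f$ is right-symmetric.

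For necessity I would argue the contrapositive: assuming $S$ is not a $\Sigma$-atom, I produce a $g$ with $g\perp_B f$ but $f\not\perp_B g$. Not being an atom gives $B\subset S$ with $\lambda(B)>0$ and $\lambda(S\setminus B)>0$; since $f$ is nonzero a.e.\ on $S$ and lies in $L_1$, both $\alpha:=\int_B|f|\,d\lambda$ and $\gamma:=\int_{S\setminus B}|f|\,d\lambda$ are finite and strictly positive, and by relabelling $B$ and $S\setminus B$ I may assume $\alpha\leq\gamma$. Taking $g:=f\mathbf{1}_B$, I compute $\int_X\overline{\sgn(g)}f\,d\lambda=\alpha$ and $\int_{g=0}|f|\,d\lambda=\gamma$, so $g\perp_B f$ by Theorem \ref{orth1}; on the other hand $\int_X\overline{\sgn(f)}g\,d\lambda=\alpha>0=\int_{f=0}|g|\,d\lambda$, so $f\not\perp_B g$, completing the contrapositive.

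The main obstacle is selecting the witness $g$ in the necessity direction. The naive idea of keeping $g$ nonzero on all of $S$ and cancelling $\int_S\overline{\sgn(g)}f$ by flipping signs fails, because $\sgn(g)$ ranges only over unit scalars and such cancellation would require splitting the $|f|$-mass of $S$ into two equal halves, which is impossible in general when $S$ is assembled from atoms of unequal weight. The key realisation is to instead support $g$ on a proper subset $B\subset S$: this makes the term $\int_{g=0}|f|$ governing $g\perp_B f$ strictly positive while keeping the term $\int_{f=0}|g|$ governing $f\perp_B g$ equal to zero, which is exactly the asymmetry I need. The remaining ingredients, namely a.e.\ constancy on an atom and the finiteness of $\alpha,\gamma$ from $f\in L_1$, are routine.
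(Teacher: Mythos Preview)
Your proof is correct and follows essentially the same route as the paper's: both directions rest on Theorem~\ref{orth1}, and for necessity both build a witness $g$ supported on a proper measurable piece of $S$ with the smaller $|f|$-mass. The only difference is cosmetic---you take $g=f\,\mathbf{1}_B$, while the paper takes $g=\sgn(f)\,\mathbf{1}_A$ on a subset $A$ of finite measure; your choice has the minor advantage of landing in $L_1(X)$ automatically without first extracting a finite-measure piece.
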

\begin{proof}
Clearly, if $A=\{x\in X:f(x)\neq0\}$ is a $\Sigma$-atom, then by Theorem \ref{orth1}, for any $g\in L_1(X)$, $g\perp_Bf$ if and only if $g|_A\equiv0$, Hence $f\perp_Bg$ if $g\perp_Bf$.\par
Conversely, if there exist disjoint measurable subsets $A$ and $B$ of finite positive measure such that $A\sqcup B\subseteq\{x\in X:f(x)\neq0\}$, then without loss of generality, we assume 
\begin{align*}
    0<\int\limits_A|f|d\lambda\leq\int\limits_B|f|d\lambda.
\end{align*}
Setting $g: X\to\mathbb{K}$ given by
\begin{equation*}
    g(x):=
    \begin{cases}
        \sgn(f(x)),~~&x\in A,\\
        0,~~&x\notin A,
    \end{cases}
\end{equation*}
we get $g\in L_1(X)$. Also, by Theorem \ref{orth1}, $g\perp_Bf$ and $f\not\perp_Bg$.
\end{proof}

\section{Birkhoff-James orthogonality and its pointwise symmetry in $L_p(X)$, $p\in(1,\infty)\setminus\{2\}$}
In this section, we characterize Birkhoff-James orthogonality and its pointwise symmetry in $L_p(X)$ for $1<p<\infty$, $p\neq2$. It is well-known that $L_p(X)$ is smooth and hence the characterization of smoothness here is redundant. Our approach for $L_p(X)$, $p\in(1,\infty)\setminus\{2\}$ is similar to the $L_1(X)$ case. We first study the support functional (which is unique here as the space is smooth) of a non-zero element and therefrom obtain a characterization of Birkhoff-James orthogonality by James' characterization. The characterization of pointwise symmetry would then follow from the orthogonality characterization.\par
Let us fix $p\in(1,\infty)\setminus\{2\}$. The following theorem characterizing the (unique) support functional of $f\in L_p(X)\setminus\{0\}$ follows directly from the condition of equality in Hölder's inequality.
\begin{theorem}\label{suppp}
Let $f\in L_p(X)\setminus\{0\}$ and let $\frac{1}{p}+\frac{1}{q}=1$. Suppose $g\in L_q(X)=L_p(X)^*$. Then $g\in J(f)$ if and only if
\begin{equation*}
    g(x)=\frac{1}{\|f\|_p^{p-1}}\overline{\sgn(f(x))}|f(x)|^{p-1},~~x\in X.
\end{equation*}
\end{theorem}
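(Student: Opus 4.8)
The plan is to characterize $J(f)$ by extracting the unique support functional directly from the equality case in Hölder's inequality, exactly as the theorem statement anticipates. Since $L_p(X)$ is uniformly convex for $1 < p < \infty$, it is smooth, so $J(f)$ is a singleton; it therefore suffices to exhibit one functional $g \in L_q(X)$ satisfying $\|g\|_q = 1$ and $\int_X g(x) f(x)\, d\lambda(x) = \|f\|_p$, and conclude that it is the unique element of $J(f)$. I would define the candidate
\begin{equation*}
    g(x) := \frac{1}{\|f\|_p^{p-1}} \overline{\sgn(f(x))} |f(x)|^{p-1}, \quad x \in X,
\end{equation*}
and verify both properties by direct computation.

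First I would check that $g \in L_q(X)$ and compute its norm. Since $|g(x)| = \|f\|_p^{-(p-1)} |f(x)|^{p-1}$, we have $|g(x)|^q = \|f\|_p^{-(p-1)q} |f(x)|^{(p-1)q}$; using the conjugacy relation $\frac{1}{p} + \frac{1}{q} = 1$, which gives $(p-1)q = p$ and $(p-1)q = p$, the exponent on $|f(x)|$ becomes $p$, so $\int_X |g|^q\, d\lambda = \|f\|_p^{-(p-1)q} \int_X |f|^p\, d\lambda = \|f\|_p^{-p} \cdot \|f\|_p^p = 1$, whence $\|g\|_q = 1$. Next I would verify the action on $f$: since $\overline{\sgn(f(x))} f(x) = |f(x)|$ wherever $f(x) \neq 0$ (and the integrand vanishes where $f(x) = 0$), we get
\begin{equation*}
    \int_X g(x) f(x)\, d\lambda(x) = \frac{1}{\|f\|_p^{p-1}} \int_X |f(x)|^{p-1} |f(x)|\, d\lambda(x) = \frac{\|f\|_p^p}{\|f\|_p^{p-1}} = \|f\|_p.
\end{equation*}
This establishes $g \in J(f)$ and hence, by smoothness, $g$ is the unique support functional, proving sufficiency; necessity follows because any support functional must coincide with this unique one.

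For a self-contained treatment that does not presuppose smoothness, I would instead argue through the equality condition in Hölder's inequality directly. If $g \in J(f)$, then $\|f\|_p = |\int_X g f\, d\lambda| \le \int_X |g||f|\, d\lambda \le \|g\|_q \|f\|_p = \|f\|_p$, forcing equality throughout. Equality in the triangle inequality for the integral forces $g(x)f(x)$ to have almost everywhere constant argument equal to that of $\|f\|_p \ge 0$, i.e.\ $g(x)f(x) = |g(x)||f(x)|$ a.e., while equality in Hölder forces $|g(x)|^q$ proportional to $|f(x)|^p$ a.e. Combining the two, on the set where $f \neq 0$ one recovers $g(x) = \|f\|_p^{-(p-1)} \overline{\sgn(f(x))} |f(x)|^{p-1}$, and the normalization $\|g\|_q = 1$ pins down the proportionality constant and forces $g = 0$ a.e.\ on $\{f = 0\}$. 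The only mild subtlety, and the step I would be most careful about, is correctly handling the complex sign: in the complex scalar case one must argue that equality in $|\int g f| \le \int |g f|$ forces the pointwise phase alignment, and the role of $\overline{\sgn(f(x))}$ rather than $\sgn(f(x))$ must be tracked precisely so that $\overline{\sgn(f(x))} f(x) = |f(x)|$ holds. Everything else reduces to the exponent arithmetic $(p-1)q = p$.
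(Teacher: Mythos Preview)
Your proposal is correct and matches the paper's approach: the paper states only that the result ``follows directly from the condition of equality in H\"older's inequality,'' which is precisely your second, self-contained argument with the details filled in. Your first route via smoothness of $L_p$ is a valid shortcut the paper also implicitly relies on (it remarks just before the theorem that $L_p(X)$ is smooth), though the implication ``uniformly convex $\Rightarrow$ smooth'' is not true in general---better to invoke directly that $L_p$ is smooth for $1<p<\infty$, or that its dual $L_q$ is strictly convex.
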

Using this result, we now characterize Birkhoff-James orthogonality in $L_p(X)$.
\begin{theorem}\label{orthp}
If $f,g\in L_p(X)$, then $f\perp_Bg$ if and only if
\begin{align*}
    \int\limits_X\overline{\sgn(f(X))}|f(x)|^{p-1}g(x)d\lambda(x)=0.
\end{align*}
\end{theorem}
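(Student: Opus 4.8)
The plan is to invoke James' characterization of Birkhoff-James orthogonality together with Theorem \ref{suppp}, which already identifies the unique support functional of a nonzero $f\in L_p(X)$. Recall James proved that $f\perp_B g$ if and only if there exists $h\in J(f)$ with $h(g)=0$; since $L_p(X)$ is smooth for $1<p<\infty$, the set $J(f)$ is a singleton, so this condition reduces to the single support functional annihilating $g$. Thus the entire argument is a substitution: the orthogonality $f\perp_B g$ is equivalent to $g(h)=0$ where $h$ is the functional given explicitly in Theorem \ref{suppp}.

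The key steps, in order, are as follows. First I would dispose of the trivial case $f=0$, where orthogonality holds vacuously and the displayed integral is identically zero. For $f\neq 0$, I would write down the action of the unique $g_0\in J(f)\subset L_q(X)$ on the element $g\in L_p(X)$, namely
\begin{equation*}
    g_0(g)=\int\limits_X g_0(x)g(x)d\lambda(x)=\frac{1}{\|f\|_p^{p-1}}\int\limits_X\overline{\sgn(f(x))}|f(x)|^{p-1}g(x)d\lambda(x),
\end{equation*}
using the explicit formula from Theorem \ref{suppp}. Since the positive scalar $\|f\|_p^{-(p-1)}$ does not affect whether this quantity vanishes, $g_0(g)=0$ holds if and only if the displayed integral in the statement equals zero. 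Invoking James' characterization then gives $f\perp_B g$ if and only if that integral vanishes, which is exactly the claim.

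The only genuine point requiring care is the well-definedness of the integral, i.e.\ that $\overline{\sgn(f)}|f|^{p-1}g$ is integrable. This follows because $\overline{\sgn(f)}|f|^{p-1}\in L_q(X)$ (its $q$-th power integrates $|f|^{(p-1)q}=|f|^p$, which is finite), and $g\in L_p(X)$, so H\"older's inequality guarantees integrability; this is the same computation underlying Theorem \ref{suppp}. I do not expect any substantive obstacle here: the difficulty of the $L_p$ theory has already been absorbed into the preceding support-functional characterization, and this theorem is essentially a one-line consequence of James' theorem combined with smoothness. The main thing to verify is that reducing James' existential condition to a single equation is justified precisely by the smoothness of $L_p(X)$, which makes $J(f)$ a singleton.
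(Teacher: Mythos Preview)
Your proposal is correct and matches the paper's approach exactly: the paper states this theorem without proof, as an immediate consequence of James' characterization combined with Theorem \ref{suppp} and the smoothness of $L_p(X)$. Your write-up simply makes explicit the one-line deduction the paper leaves to the reader, including the disposal of the $f=0$ case and the integrability check via H\"older.
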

We can now characterize pointwise symmetry of Birkhoff-James orthogonality in $L_p(X)$.
\begin{theorem}
Suppose $f\in L_p(X)$. Then $f$ is left-symmetric if and only if $f$ is right-symmetric if and only if exactly one of the following conditions holds:
\begin{enumerate}
    \item $f\equiv0$.
    \item $\{x\in X:f(x)\neq0\}$ is a $\Sigma$-atom.
    \item There exist $\Sigma$-atoms $A$ and $B$ such that $\{x\in X:f(x)\neq0\}=A\sqcup B$ and $\lambda(A)|f(x)|^p=\lambda(B)|f(y)|^p$ for almost every $x\in A$ and $y\in B$.
\end{enumerate}
\end{theorem}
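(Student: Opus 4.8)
The plan is to reduce everything to the scalar orthogonality relation of Theorem~\ref{orthp} together with the measure-theoretic geometry of the support $S:=\{x\in X:f(x)\neq0\}$. Writing $\Phi_f(g):=\int_X\overline{\sgn(f(x))}|f(x)|^{p-1}g(x)\,d\lambda(x)$, Theorem~\ref{orthp} reads $f\perp_Bg\iff\Phi_f(g)=0$. Since $|f|^{p-1}$ vanishes off $S$, both $\Phi_f(g)$ and $\Phi_g(f)$ depend only on $g|_S$; in particular every $g$ supported off $S$ satisfies $f\perp_Bg$ and $g\perp_Bf$ automatically. Hence $f$ is left- (resp.\ right-) symmetric if and only if its restriction $f|_S$ is, viewed in $L_p(S)$ where it is nowhere zero, so I may assume $f\not\equiv0$ and $f\neq0$ a.e.\ (the case $f\equiv0$ being trivially both-sided symmetric and giving condition~(1)). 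After this reduction the goal is to show that left-symmetry, right-symmetry, and the disjunction ``$X$ is a single $\Sigma$-atom'' or ``$X=A\sqcup B$ for two $\Sigma$-atoms with $\lambda(A)|a|^p=\lambda(B)|b|^p$'' are all equivalent. The single computational device is the test function $g=\zeta_i f$ on a disjoint family $E_1,\dots,E_m\subseteq X$ (and $g=0$ elsewhere), $\zeta_i\in\mathbb{K}$. Because $\overline{\sgn(f)}f=|f|$, with $K_i:=\int_{E_i}|f|^p\,d\lambda$ one has
\[
\Phi_f(g)=\sum_{i=1}^m\zeta_iK_i,\qquad \Phi_g(f)=\sum_{i=1}^m\overline{\sgn(\zeta_i)}\,|\zeta_i|^{p-1}K_i,
\]
where each $K_i$ is finite as $f\in L_p$, and $K_i>0\iff\lambda(E_i)>0$. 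This recasts the problem as the finite weighted problem already seen in the sequence-space case \cite{usseq}.

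For sufficiency I would verify symmetry directly. If $X$ is a single atom then $L_p(X)$ is one-dimensional and $\Phi_f(g)=0\iff g=0$, so $\perp_B$ is trivially symmetric at $f$. If $X=A\sqcup B$ with balanced atoms, writing $g=\zeta a$ on $A$ and $g=\eta b$ on $B$, the display above with the balance condition gives $\Phi_f(g)=K(\zeta+\eta)$ and $\Phi_g(f)=K(\overline{\sgn\zeta}\,|\zeta|^{p-1}+\overline{\sgn\eta}\,|\eta|^{p-1})$ for a common $K>0$; both vanish exactly when $\eta=-\zeta$, so $f\perp_Bg\iff g\perp_Bf$, yielding both left- and right-symmetry.

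For necessity I would contrapose and build a breaking $g$ through the device. First, if $X$ (the support) is not purely atomic or has at least three atoms, it contains three pairwise disjoint sets $E_1,E_2,E_3$ of positive measure; taking $\zeta=(K_2,-K_1,0)$ when two of the $K_i$ differ, or $\zeta=(1,1,-2)$ when $K_1=K_2=K_3$, gives $\Phi_f(g)=0$ but $\Phi_g(f)\neq0$, so $f$ is not left-symmetric, and the analogous choices with the $(p-1)$-th root inserted (e.g.\ $\zeta=(1,1,-2^{1/(p-1)})$) make $\Phi_g(f)=0$ while $\Phi_f(g)\neq0$, defeating right-symmetry as well. This forces $X$ to be one or two atoms. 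In the two-atom case $E_1=A$, $E_2=B$ are the only positive-measure pieces, and with $K_1=\lambda(A)|a|^p$, $K_2=\lambda(B)|b|^p$ the choice $\zeta=(K_2,-K_1)$ yields $\Phi_f(g)=0$ and $\Phi_g(f)=K_1K_2\bigl(K_2^{p-2}-K_1^{p-2}\bigr)$, which is nonzero precisely when $K_1\neq K_2$, i.e.\ when balance fails; again a dual choice defeats right-symmetry. Thus both symmetries require conditions~(2)/(3), closing the chain.

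The crux, and the only place where $p\neq2$ is used, is exactly this nonvanishing: $K_2^{p-2}-K_1^{p-2}\neq0$ whenever $K_1\neq K_2$, and $2-2^{p-1}\neq0$. The main obstacle I anticipate is organizing the necessity so that the ``three disjoint positive-measure pieces'' alternative is seen to characterize precisely the measure spaces with at most two atoms, and verifying that the one device $g=\zeta_if$ simultaneously defeats left- and right-symmetry; handling this uniformly is what lets the three stated equivalences be proved together rather than implication by implication, and it is also what cleanly subsumes the non-$\sigma$-finite and purely non-atomic cases without any special hypotheses on $X$.
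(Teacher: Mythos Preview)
Your proposal is correct and follows essentially the same route as the paper: both arguments use test functions of the form $g=\zeta_i f$ on disjoint measurable pieces of the support, compute $\Phi_f(g)=\sum\zeta_iK_i$ and $\Phi_g(f)=\sum\overline{\sgn\zeta_i}\,|\zeta_i|^{p-1}K_i$, and exploit that the map $t\mapsto\overline{\sgn t}\,|t|^{p-1}$ is not linear for $p\neq2$ to produce counterexamples. The only organizational differences are that you first reduce to the support $S$ (the paper works on $S$ implicitly), and that in the three-piece case you treat the equal-weight subcase $(1,1,-2)$ separately, whereas the paper merges two of the three pieces into one and uses the strict inequality $\int_A|f|^p<\int_{B\sqcup C}|f|^p$ (always achievable by relabeling) to avoid that split; in the two-atom case the paper derives the balance condition by imposing $f\perp_Bg$ and $g\perp_Bf$ simultaneously, while you exhibit an explicit breaking $\zeta=(K_2,-K_1)$, but these are equivalent computations.
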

\begin{proof}
The sufficiency can be obtained from Theorem \ref{orthp} by an elementary computation. For the necessity, let us assume that $f\not\equiv 0$ and $\{x\in X:f(x)\neq0\}$ is not a $\Sigma$-atom. We consider the following two cases:\\
\textbf{Case I:} There exist $\Sigma$-atoms $A$ and $B$ such that $\{x\in X:f(x)\neq0\}=A\sqcup B$.\\
If $g\in L_p(X)$ with $\{x\in X:g(X)\neq0\}=A\sqcup B$ such that $g\perp_Bf$ and $f\perp_Bg$, then for almost every $x\in A$ and $y\in B$,
\begin{equation}\label{condio1}
    g(x)=\frac{\lambda(B)|f(y)|^{p-1}\overline{\sgn(f(y))}}{\lambda(A)|f(x)|^{p-1}\overline{\sgn(f(x))}}g(y),
\end{equation}
and
\begin{equation}\label{condio2}
    f(x)=\frac{\lambda(B)|g(y)|^{p-1}\overline{\sgn(g(y))}}{\lambda(A)|g(x)|^{p-1}\overline{\sgn(g(x))}}f(y).
\end{equation}
Hence
\begin{align*}
    \left[\lambda(B)|f(y)|^p\right]^{p-2}=\left[\lambda(A)|f(x)|^p\right]^{p-2}.
\end{align*}
Since $p\neq2$, $f$ must satisfy condition 3. However, using \eqref{condio1} or \eqref{condio2}, we can always construct $g\in L_p(X)$ with $\{x\in X:g(x)\neq0\}=A\sqcup B$ such that $f\perp_Bg$ or $g\perp_Bf$ respectively.\\
\textbf{Case II:} There exist $A,B,C\in\Sigma$ disjoint such that all the sets are of finite positive measure and $A\sqcup B\sqcup C\subseteq\{x\in X:f(x)\neq0\}$. \\
Without loss of generality, let us assume that 
\begin{align*}
    0<\int\limits_{A}|f(x)|^pd\lambda(x)<\int\limits_{B\sqcup C}|f(x)|^pd\lambda(x).
\end{align*}
Consider $g_{a,b}:X\to\mathbb{K}$ given by 
\begin{align*}
    g_{a,b}(x):=
    \begin{cases}
        a f(x),~~&x\in A,\\
        b f(x),~~&x\in B\sqcup C,\\
        0,~~&\text{otherwise},
    \end{cases}
\end{align*}
for some $a,b\in\mathbb{K}$. Then,
\begin{align*}
    \int\limits_{X}\overline{\sgn(f(x))}|f(x)|^{p-1}g_{a,b}(x)d\lambda(x)=a\int\limits_A|f(x)|^pd\lambda(x)+b\int\limits_B|f(x)|^pd\lambda(x),
\end{align*}
and
\begin{align*}
    \int\limits_{X}\overline{\sgn(g_{a,b}(x))}|g_{a,b}(x)|^{p-1}f(x)d\lambda(x)&=\overline{\sgn(a)}|a|^{p-1}\int\limits_A|f(x)|^pd\lambda(x)\\
    &+\overline{\sgn(b)}|b|^{p-1}\int\limits_B|f(x)|^pd\lambda(x)
\end{align*}
Thus, $f\perp_Bg_{a,b}$ but $g_{a,b}\not\perp_Bf$ when
\begin{align*}
    a=\int\limits_{B\cup C}|f(x)|^pd\lambda(x),~~b=-\int\limits_{A}|f(x)|^pd\lambda(x),
\end{align*}
and $g_{a,b}\perp_Bf$ but $f\not\perp_Bg_{a,b}$ when
\begin{align*}
    a=\left[\int\limits_{B\sqcup C}|f(x)|^pd\lambda(x)\right]^\frac{1}{p-1},~~b=-\left[\int\limits_{A}|f(x)|^pd\lambda(x)\right]^\frac{1}{p-1}.
\end{align*}
\end{proof}

\end{document}